 \newtheorem{remark}{Remark}
\newtheorem{proof}{Proof}
\newtheorem{algorithm}{Algorithm}
\newtheorem{proposition}{Proposition}
\newtheorem{corollary}{Corollary}
\newcommand{\mcF}{\mathcal{F}}
\newcommand{\mcE}{\mathcal{E}}
\newcommand{\mcX}{\mathcal{X}}
\newcommand{\mcL}{\mathcal{L}}
\newcommand{\mcA}{\mathcal{A}}
\newcommand{\mcAe}{{\mcA_\epsilon}}
\newcommand{\mcT}{\mathcal{T}}
\newcommand{\mbR}{\mathbb{R}}
\newcommand{\mbRn}{{\mathbb{R}^n}}
\newcommand{\Omg}{{\Omega}}
\newcommand{\Gam}{{\Gamma}}
\newcommand{\gam}{{\gamma}}
\newcommand{\game}{{\gamma_\epsilon}}
\newcommand{\mude}{{\mu_{\delta,\epsilon}}}
\def \xb{\bm{x}}
\def \yb{\bm{y}}
\def \ue{{u_\epsilon}}
\def \intl{\int\limits}
\def\Omg{{\Omega}}
\newcommand{\eugenio}[1]{\textcolor{red}{\textbf{#1}}}
\title{\bf Efficient quadrature rules for finite element discretizations of nonlocal equations}
\author{%
  Eugenio Aulisa \footnote{Department of Mathematics and Statistics, Texas Tech University, TX, USA.}%
   \and
  Giacomo Capodaglio\footnote{Computational Physics and Methods Group, Los Alamos National Laboratory, NM, USA}%
  \and 
   Andrea Chierici \footnote{Department of Industrial Engineering, University of Bologna, Bologna, Italy}%
 \and
   Marta D'Elia \footnote{Computational Science and Analysis, Sandia National Laboratories, CA, USA}%
  }
\date{}
\begin{document}
\maketitle

\begin{abstract}
In this paper we design efficient quadrature rules for finite element discretizations of nonlocal diffusion problems with compactly supported kernel functions. Two of the main challenges in nonlocal modeling and simulations are the prohibitive computational cost and the nontrivial implementation of discretization schemes, especially in three-dimensional settings. In this work we circumvent both challenges by introducing a parametrized mollifying function that improves the regularity of the integrand, utilizing an adaptive integration technique, and exploiting parallelization. We first show that the ``mollified'' solution converges to the exact one as the mollifying parameter vanishes, then we illustrate the consistency and accuracy of the proposed method on several two- and three-dimensional test cases. Furthermore, we demonstrate the good scaling properties of the parallel implementation of the adaptive algorithm and we compare the proposed method with recently developed techniques for efficient finite element assembly.
\end{abstract} 
 
\section{Introduction}\label{sec:introduction}
Nonlocal equations have become the model of choice in applications where the global behavior of the system is affected by long-range forces at small scales. In particular, these equations are preferable to partial differential equations (PDEs) in presence of anomalous behavior, such as superdiffusion and subdiffusion, multiscale behavior, and discontinuities or irregularities in the solution that cannot be captured by classical models. For these reasons, nonlocal models are currently employed in several scientific and engineering applications including surface or subsurface transport 
\cite{Benson2000,Benson2001,Deng2004,Schumer2003,Schumer2001},
fracture mechanics
\cite{Ha2011,Littlewood2010,Silling2000},
turbulence
\cite{DiLeoni-2020,Pang2020},
image processing
\cite{Buades2010,DElia2019imaging,Gilboa2007,Lou2010}
and stochastic processes
\cite{Burch2014,DElia2017,Meerschaert2012,Metzler2000,Metzler2004}.

The most general form \cite{DElia2020Unified} of a nonlocal operator for a scalar function $u:\mbRn\to\mbR$ is given by
\begin{equation*}
\mcL u(\xb) = 2\int_\mbRn 
(u(\yb)-u(\xb)) \gamma(\xb,\yb)\,d\yb,
\end{equation*}
where $\gamma$, the kernel, is a compactly supported function over $B_\delta(\xb)$, the ball of radius $\delta$ centered at $\xb$. We refer to $\delta$ as horizon or interaction radius; this quantity determines the extent of the nonlocal interactions and represents the length scale of the operator. The integral form allows one to catch long-range forces within the length scale and reduces the regularity requirements on the solutions. It also highlights the main difference between nonlocal models and PDEs, i.e., the fact that interactions can occur at distance, without contact. The kernel $\gamma$ depends on the application and determines the regularity properties of the solutions; the choice of its parameters or functional form is among the most investigated open questions in the current nonlocal literature \cite{Pang2020,burkovska2020,DElia2014DistControl,DElia2016ParamControl,Gulian2019,Pang2019fPINNs,Pang2017discovery,Xu2020learning,You2020Regression,You2020aaai}. In this work we limit ourselves to smooth integrable kernels since the treatment of more complex functions is not germane to the issues investigated in this paper, as clarified later on. 

The integral nature of the operator poses several modeling and numerical challenges including the treatment of nonlocal interfaces \cite{Alali2015,Capodaglio2019}, the prescription of nonlocal boundary conditions \cite{Cortazar2008,DEliaNeumann2019} and the design of efficient discretization schemes and numerical solvers \cite{AinsworthGlusa2018,Capodaglio2020DD,acta20,DEliaFEM2020,Pasetto2019,silling2005meshfree,Wang2010}. In fact, the numerical solution of nonlocal equations becomes prohibitively expensive when the ratio between the interaction radius and the discretization size increases. Even though the nonlocal literature offers several examples of meshfree, particle-type discretizations \cite{Chen2006meshless,parks2012peridigm,parks2010lammps,silling2005meshfree}, in this paper we focus on finite element (FE) methods. This allows us to easily deal with nontrivial domains, achieve high-order accuracy, and use mesh adaptivity. Furthermore, the nonlocal vector calculus \cite{du13} provides a means for a rigorous stability and convergence analysis of variational methods as it allows us to analyze nonlocal diffusion problems in a similar way as elliptic PDEs \cite{du12}.

When cast in a variational form, the nonlocal problem associated with the operator $\mcL$ results in a bilinear form characterized by the following double integral
\begin{equation*}
\int_{\Omega\cup\Gamma}  \int_{(\Omega\cup\Gamma)\cap B_\delta(\xb)}\big(u(\yb) - u(\xb)\big)\big(\varphi(\yb) - \varphi(\xb)\big)\gamma(\xb,\yb)d\yb  d\xb 
\end{equation*}
where we explicitly reported the domain of integration in the inner integral. Here, $\Omega\in\mbRn$ and $\Gamma$ are the domain of interest and the corresponding ``nonlocal boundary'' and $\varphi$ is an appropriate test function. Thus, the variational setting introduces further computational challenges. Not only do we have to numerically evaluate a double integral, but the integrand function is discontinuous, due to the compact support of $\gamma$ and to the fact that, in FE settings, the tests functions are also compactly supported. 

The paper by D'Elia et al.\cite{DEliaFEM2020} thoroughly describes the challenges associated with nonlocal FE discretizations and proposes approximation techniques for efficient and accurate implementations. In particular, the authors introduce ``approximate balls'' that facilitate the assembly procedure by substituting the Euclidean ball $B_\delta(\xb)$ with suitable polygonal approximations. They also suggest a set of quadrature rules for the outer and inner integration and analyze the convergence properties of the resulting scheme. With the same spirit, in this work, we propose an alternative way to efficiently evaluate the integral above by circumventing the issue of integrating a truncated function. The key idea of this paper is the introduction of a mollifier \cite{mousavi2012efficient} to approximate the discontinuous kernel function; by doing so, the new, approximate, and parameterized kernel is a smooth function for which standard Gaussian quadrature rules can be employed over every element without compromising their accuracy. Additionally, we introduce adaptive quadrature rules for the numerical integration of the outer integral. In fact, contrary to intuition, sophisticated integration techniques for the outer integral are required in order to prevent the quadrature error from exceeding the FE one \cite{DEliaFEM2020}.

The main contributions of this paper are
\begin{itemize}
    \item The introduction of a parametrized, smooth, approximate kernel, by means of a mollifier, that yields a smooth integrand over every element. This allows us to avoid the tedious and impractical task of determining the intersections of the ball with the elements, and hence represents a major advantage of our method in three-dimensional simulations. 
    \item The design of adaptive quadrature rules for the outer integral and of a parallel algorithm for efficient simulations.
    \item The theoretical proof and numerical illustration of the convergence of the approximate, mollified solution to the analytic one as the mollified kernel approaches $\gamma$.
    \item The numerical illustration of the convergence of the mollified solution to the exact one as we refine the mesh and a numerical study of the dependence of the convergence behavior with respect to the parameters. 
    \item The demonstration via two-dimensional and three-dimensional numerical tests of the scalability of our algorithm.
\end{itemize}

\paragraph{Outline of the paper} 
In the following section we define the notation that is used throughout the paper and recall important results on nonlocal calculus. In Section \ref{sec:weak-form} we introduce the mollifier function and the associated approximate, parametrized weak form of the nonlocal diffusion problem. We also analyze the convergence of the solution of the latter to the original weak solution. In Section \ref{sec:fem} we describe the nonlocal FE discretization, with special focus on the assembly procedure, and briefly recall its challenges. In Section \ref{sec:adaptivity} we introduce adaptive quadrature rules for the numerical integration of the outer integral. In Section \ref{sec:numerics} we illustrate our theoretical results via two- and three-dimensional tests. We also discuss a parallel implementation of the FE assembly procedure and show the corresponding scaling results. Finally, we summarize our contributions in Section \ref{sec:conclusion}.

\section{Preliminaries}\label{sec:preliminaries}
Let $\Omega \in \mathbb{R}^n$ be open and bounded, $n=1,2,3$.
Given some $\delta>0$, we define the interaction domain $\Gamma$ of $\Omega$ as the set of all points not in $\Omega$ that are within a $\delta$ distance from points in $\Omega$, i.e.
\begin{equation}\label{eq:interaction-domain}
\Gamma = \{\yb\in \mathbb R^n \setminus \Omega:  |\xb-\yb|\leq\delta \; \text{for some} \; \xb\in\Omega\},
\end{equation}
see Figure \ref{fig1} for a graphical example in $\mathbb{R}^2$. Note that $\Gamma$ depends on $\delta$ even if it is not explicitly indicated. Let $\gamma: \mathbb{R}^n \times \mathbb{R}^n \rightarrow \mathbb{R}$ be an integrable nonnegative symmetric kernel that is also radial\footnote{For a discussion on nonpositive kernels and nonsymmetric kernels, see \cite{Mengesha-sign-changing} and \cite{DElia2017}, respectively.},  namely $\gamma(\xb,\yb) = \gamma(\yb,\xb)$ and $\gamma(\xb,\yb) = \gamma(|\xb-\yb|)$. We also assume that $\gamma$ has bounded support over the ball of radius $\delta$ centered at $\xb$, i.e. $B_\delta(\xb)$. For a scalar function $u:\mbRn\to\mbR$ we
define the nonlocal Laplacian as 
\begin{equation}\label{eq:L}
\mcL u(\xb) = 2\int_\mbRn 
(u(\yb)-u(\xb)) \gamma(\xb,\yb)\,d\yb.
\end{equation}
\begin{figure}[t!]
   \centering
   \includegraphics[width=2in]{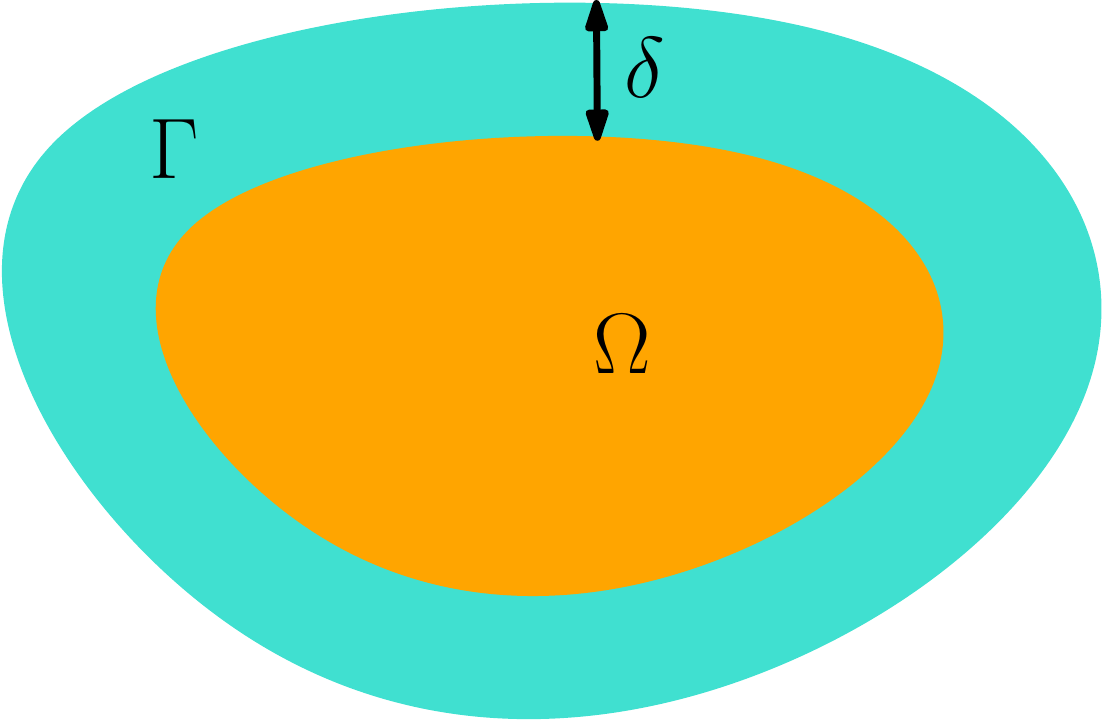}
\caption{Example of domain $\Omega$ with associated interaction domain $\Gamma$.}
   \label{fig1}
\end{figure}
The strong form of a nonlocal Poisson problem is then given by: for $f:\Omega\to\mbR$, and $g:\mbRn\setminus\Omega\to\mbR$, find $u$ such that
\begin{equation}\label{eq:poisson}
\left\{\begin{aligned}
-\mcL u (\xb) & =f(\xb),  
&\quad \xb\in\Omega\\
u(\xb) &= g(\xb),         
&\quad \xb\in \Gamma
\end{aligned}\right.
\end{equation}
where the second condition in \eqref{eq:poisson} is the nonlocal counterpart of a Dirichlet boundary condition for PDEs and it is referred to as {\it Dirichlet volume constraint}\footnote{For definition and analysis of Neumann volume constraints we refer to \cite{Du2012} and for its numerical treatment we refer to, e.g.,  \cite{DEliaNeumann2019}.}. Such condition is required \cite{du12} to guarantee the well-posedness of \eqref{eq:poisson}. 
The weak form of the Poisson problem is obtained by multiplying the first equation in \eqref{eq:poisson} by a test function $\varphi=0$ in $\Gamma$ and by applying the nonlocal first Green's identity \cite{du13}. This yields
\begin{equation}\label{eq:poisson-weak-extended}
\begin{aligned}
0 & =  \int\limits_\Omega (-\mcL u -f)\, \varphi \,d\xb \\
& = 
\iint\limits_{(\Omega\cup\Gamma)^2} (u(\yb)-u(\xb))(\varphi(\yb)-\varphi(\xb))\,\gamma(\xb,\yb) \,d\yb\,d\xb 
- \int\limits_\Omega f(\xb)\, \varphi(\xb) \,d\xb,
\end{aligned}
\end{equation}
Then, the weak form of the nonlocal diffusion problem reads as follows. For $f\in V'$ and $g\in V_\Gamma$, find $u\in V$ such that
\begin{equation}\label{eq:poisson-weak}
\mcA(u,v) = \mcF(v), \;\;\forall\, v\in V_0,
\quad \hbox{subject to} \; u=g \; \hbox{ in} \; \Gamma,
\end{equation}
where
\begin{equation}\label{eq:A-F}
\begin{aligned}
\mcA(u,\varphi) & = \iint\limits_{(\Omega\cup\Gamma)^2} (u(\yb)-u(\xb))(\varphi(\yb)-\varphi(\xb))\gamma(\xb,\yb) \,d\yb\,d\xb,\\
\mcF(\varphi) & =\int_\Omega f(\xb)\, \varphi(\xb) \,d\xb,
\end{aligned}
\end{equation}
and where the function spaces are defined as
\begin{equation}\label{eq:V}
\begin{aligned}
V & =\{\varphi\in L^2(\Omega\cup\Gamma): |||\varphi|||<\infty\;
\hbox{and} \; v|_{\mbRn\setminus\Omega} = 0\}\\
V_0 & =\{\varphi\in V: \varphi|_\Gamma = 0\},\\
V_\Gamma &=\{p:\Gamma\to\mbR: \,\exists\, \varphi\in V \,\hbox{such that} \; \varphi|_\Gamma=p\}.
\end{aligned}
\end{equation}
Here, the {\it energy} semi-norm $|||\cdot|||$ is defined as
\begin{equation}\label{eq:unweighted-energy}
|||\varphi|||^2 = \mcA(\varphi,\varphi),
\end{equation}
the space $V'$ is the dual space of $V$ and $V_\Gamma$ is a nonlocal trace space. Note that since the kernel is integrable and translation invariant, the energy semi-norm is a norm in the constrained space $V_0$ and satisfies a Poincar\'e inequality \cite{du12}. Furthermore, by construction, the bilinear form $\mcA(\cdot,\cdot)$ defines an inner product on $V_0$ and it is continuous and coercive with respect to the energy norm $|||\cdot|||$. Finally, the latter is equivalent to the $L^2$ norm; this allows us to establish an equivalence relationship between $V$ and $L^2(\Omega\cup\Gamma)$. Together with the continuity of $\mcF$, these facts yield the well-posedness of the weak form \eqref{eq:poisson-weak} \cite{du12}.

As commonly done in the PDE context, we recast the problem in $V_0$ by simply rewriting the solution as $u = w+\widetilde g$, where $w\in V_0$ and $\widetilde g\in V$ is an extension of $g$ to zero into $\Omega\cup\Gamma$, known in the FE framework as a lifting function. Thus, equation \eqref{eq:poisson-weak} can be rewritten in terms of $w$ as follows
\begin{align}
&\iint\limits_{(\Omega\cup\Gamma)^2}  \left(w(\xb)-w(\yb)\right)\left(\varphi(\xb)-\varphi(\yb)\right)\gamma(\xb,\yb)  d\yb\,d\xb = \int_{\Omega} f(\xb) \varphi(\xb) d\xb \nonumber \\
& \quad+ \iint\limits_{(\Omega\cup\Gamma)^2}  \left(\widetilde g(\xb)-\widetilde g(\yb)\right)\left(\varphi(\xb)-\varphi(\yb)\right)\gamma(\xb,\yb)  d\yb\,d\xb,\quad  \forall \varphi \in V_0,
\label{eq:weak-homo-extended}
\end{align}
or, equivalently,
\begin{align}
\mcA(w,\varphi)=\widetilde \mcF(\varphi)\quad  \forall \varphi \in V_0.
\label{eq:weak-homo}
\end{align}
The latter is useful for implementation purposes as it allows us to automatically take into account the presence of a non-homogeneous Dirichlet volume constraint. 

\section{Weak form approximation}\label{sec:weak-form}
We introduce a parametrized approximation of the bilinear form $\mcA$ defined in \eqref{eq:A-F} with the purpose of obtaining a weak problem that is computationally less challenging. Specifically, the approximated bilinear form is associated with a parametrized kernel function that is still integrable, radial, and compactly supported, but not discontinuous in $\Omega\cup\Gamma$. This fact makes the numerical integration of the inner integral in, e.g., \eqref{eq:poisson-weak}, a much simpler task, compared to the case of discontinuous kernel functions. 

\smallskip
For simplicity of exposition, we rewrite the ``exact'' kernel $\gam$ as  
\begin{equation}\label{eq:gamma}
\gam(\xb,\yb)=C_\delta \, \eta(\xb,\yb)
\,\mcX(\yb\in B_\delta(\xb))
\end{equation}
where $C_\delta$ is a scaling constant that guarantees that the nonlocal operator $\mcL$ associated with $\gam$ is such that $\mcL\to\Delta$ as $\delta\to 0$. Clearly, by definition, $\eta(\xb,\yb)=\eta(| \xb-\yb|)$.
Given $\varepsilon\in\mbR^+$, we approximate the bilinear form $\mathcal{A}(\cdot,\cdot)$ defined in \eqref{eq:A-F} with the parametrized bilinear form $\mathcal{A}_{\epsilon}(\cdot,\cdot)$ obtained by replacing the kernel $\gam(\xb,\yb)$ with
\begin{equation}
\label{eq:approx-kernel}
\game(\xb,\yb)=C_{\delta,\epsilon} \eta(\xb,\yb)
\mude(\xb,\yb),
\end{equation} 
where $\mude$ is an appropriately scaled mollifier function. Inspired by the mollifier function introduced in \cite{mousavi2012efficient}, for $\varepsilon < \delta$ we define $\mude: \mathbb{R}^n \times \mathbb{R}^n \rightarrow \mathbb{R}$ as the following radial function  
\begin{align}
\label{eq:mollifier}
&\mude(|\xb-\yb|)= 
\left\{
\begin{array}{l l}
1 & \mbox{ for } 0\le |\xb-\yb| < \delta -\varepsilon\\[2mm]
\xi\left(\frac{ (\delta-\varepsilon) - |\xb-\yb|}{\varepsilon}\right) &\mbox{ for }  \delta -\varepsilon \le |\xb-\yb| \le \delta +\varepsilon  \\[2mm]
0 &\mbox{ for } |\xb-\yb| > \delta + \varepsilon
\end{array}
\right.\\[3mm]
&\xi(r)=\left(\frac{128}{256} + \frac{315}{256} r - \frac{420}{256} r^3 
+ \frac{378}{256}r^5 - \frac{180}{256} r^7+ \frac{35}{256}r^9 \right),\nonumber
\end{align}
where, for given $\epsilon>0$, $C_{\delta,\epsilon}$ is such that the nonlocal operator $\mcL_\epsilon$, associated with $\game$, converges to $\Delta$ as $\delta\to 0$.
The constant $C_{\delta,\epsilon}$ is also such that it converges to $C_\delta$ as $\epsilon \to 0$.
Furthermore, it follows from the definition of $\mude$ that
$$
\lim\limits_{\epsilon\to 0}\mude(\xb,\yb) = \mcX(\yb\in B_\delta(\xb)),
$$
which, together with the property of $C_{\delta,\epsilon}$, implies that $\game$ converges pointwise to $\gam$ as $\epsilon\to 0$. Note that the support of the mollifier is bigger than the one of the original kernel function $\gamma$ as it corresponds to $B_{\delta+\epsilon}(\xb)$. 
The parametrized bilinear form $\mathcal{A}_{\epsilon}(\cdot,\cdot)$ is therefore defined as follows
\begin{align}\label{eq:Aeps}
\mathcal{A}_{\epsilon}(u,\varphi) = 
\iint_{(\Omega\cup\Gamma)^2} 
\big(u(\yb) - u(\xb)\big)\big(\varphi(\yb) - \varphi(\xb)\big)\gamma_{\epsilon}(\xb,\yb)d\yb  d\xb ,
\end{align}
and consequently the approximate weak formulation of the nonlocal volume-constrained problem now reads: find $u_{\epsilon} \in V$ such that
\begin{equation}\label{eq:weak_nonloc_approx}
\begin{aligned}
    \mathcal{A}_{\epsilon}(u_{\epsilon},\varphi) = \mcF(\varphi) \quad \forall\, \varphi \in V_0
    \quad\mbox{subject to } u_{\epsilon}=g \mbox{ on } \Gamma. 
\end{aligned}
\end{equation}
Note that the weak formulation above is defined over the same function spaces of \eqref{eq:poisson-weak}. This is allowed because the parametrized kernel $\game$ belongs to the same class of kernels as $\gamma$. As a consequence, problem \eqref{eq:weak_nonloc_approx} is also well-posed.

\subsection{Convergence of the approximate weak solution}

We find a bound for the energy norm of the difference between solutions of the weak form \eqref{eq:poisson-weak} and \eqref{eq:weak_nonloc_approx}.
First, we note that by subtracting \eqref{eq:poisson-weak} from \eqref{eq:weak_nonloc_approx} we obtain
\begin{equation}\label{eq:equivalence-forms}
\mcA(u,\varphi) = \mcA_\epsilon(u_\epsilon,\varphi) 
\quad \forall\,\varphi\in V_0.
\end{equation}
Our ultimate goal is to find a bound for $|||u-u_\epsilon|||$, or, equivalently, for $|\mcA(u-u_\epsilon,u-u_\epsilon)|$, being $u$ and $u_\epsilon$ solutions of \eqref{eq:poisson-weak} and \eqref{eq:weak_nonloc_approx} respectively.
We first consider a generic test function; equality \eqref{eq:equivalence-forms} implies
\begin{displaymath}
\begin{aligned}
|\mcA(u-\ue,\varphi)|
& = |\mcA(u,\varphi)-\mcA(\ue,\varphi)| 
  = |\mcAe(\ue,\varphi)-\mcA(\ue,\varphi)| \\
& = \left|\;\;\iint\limits_{(\Omega\cup\Gamma)^2} (\ue(\xb)-\ue(\yb))(\varphi(\xb)-\varphi(\yb))(\gamma_\epsilon(\xb,\yb)-\gamma(\xb,\yb))\,d\yb\,d\xb \right|.
\end{aligned}
\end{displaymath}
By expanding the product, we have
\begin{displaymath}
\begin{aligned}
& \left|\;\;\iint\limits_{(\Omega\cup\Gamma)^2} (\ue(\xb)-\ue(\yb))(\varphi(\xb)-\varphi(\yb))
(\gamma_\epsilon(\xb,\yb)-\gamma(\xb,\yb))\,d\yb\,d\xb \right| \\
\leq & \iint\limits_{(\Omega\cup\Gamma)^2} |\ue(\xb)\varphi(\xb)|\,|\gamma_\epsilon(\xb,\yb)-\gamma(\xb,\yb)|\,d\yb\,d\xb \\
+ & \iint\limits_{(\Omega\cup\Gamma)^2} |\ue(\yb)\varphi(\yb)|\,|\gamma_\epsilon(\xb,\yb)-\gamma(\xb,\yb)|\,d\yb\,d\xb \\
+ & \iint\limits_{(\Omega\cup\Gamma)^2} |\ue(\xb)\varphi(\yb)|\,|\gamma_\epsilon(\xb,\yb)-\gamma(\xb,\yb)|\,d\yb\,d\xb \\
+ & \iint\limits_{(\Omega\cup\Gamma)^2} |\ue(\yb)\varphi(\xb)|\,|\gamma_\epsilon(\xb,\yb)-\gamma(\xb,\yb)|\,d\yb\,d\xb.
\end{aligned}
\end{displaymath}
By switching the order of integration and renaming dummy variables in the second and fourth terms above, we obtain
\begin{equation}\label{eq:bilin-form-estimate}
\begin{aligned}
& 2 \int\limits_{\Omega\cup\Gamma} |\ue(\xb)\varphi(\xb)|\int\limits_{\Omega\cup\Gam}
|\gamma_\epsilon(\xb,\yb)-\gamma(\xb,\yb)|\,d\yb\,d\xb\\
+ &\;2  \int\limits_{\Omega\cup\Gamma} |\varphi(\xb)|
\int\limits_{\Omega\cup\Gam}
|\ue(\yb)|\,|\gamma_\epsilon(\xb,\yb)-\gamma(\xb,\yb)|\,d\yb\,d\xb \\
\leq & \;2 c_1(\epsilon) \int\limits_{\Omega\cup\Gamma} |\ue(\xb)\varphi(\xb)|\,d\xb\\
+&\; 2 \int\limits_{\Omega\cup\Gamma} |\varphi(\xb)|
\|\ue\|_{L^2(\Omega\cup\Gam)} \|\gam_\epsilon-\gam\|_{L^2(\Omega\cup\Gam)} \,d\xb\\
\leq & \;2 c_1(\epsilon) \|\ue\|_{L^2(\Omega\cup\Gam)} \|\varphi\|_{L^2(\Omega\cup\Gam)} 
+ 2 c_2(\epsilon) |\Omega\cup\Gamma|^\frac12 \|\ue\|_{L^2(\Omega\cup\Gam)} \|\varphi\|_{L^2(\Omega\cup\Gam)},
\end{aligned}
\end{equation}
where we used the Cauchy-Schwarz inequality for the outer and inner integral for the first and second term, respectively, and where  
\begin{displaymath}
\begin{aligned}
c_1(\epsilon)& =\max_{\xb\in \Omg\cup\Gam} \int_{B_{\delta+\epsilon}(\xb)\cap(\Omega\cup\Gam)} |\gam_\epsilon(\xb,\yb)-\gam(\xb,\yb)|\,d\yb \leq \int_{B_{\delta+\epsilon}(\mathbf{0})} |\gam_\epsilon({\mathbf 0},\yb)-\gam({\mathbf 0},\yb)|\,d\yb,\\[2mm]
c^2_2(\epsilon)& =\max_{\xb\in \Omg\cup\Gam} \int_{B_{\delta+\epsilon}(\xb)\cap(\Omega\cup\Gam)} (\gam_\epsilon(\xb,\yb)-\gam(\xb,\yb))^2 \,d\yb
\leq \int_{B_{\delta+\epsilon}({\mathbf 0})}  (\gam_\epsilon({\mathbf 0},\yb)-\gam({\mathbf 0},\yb))^2 \,d\yb.
\end{aligned}
\end{displaymath}
We recall that, by definition, $\gam_\epsilon\to\gam$ pointwise as $\epsilon\to 0$; thus, $c_i(\epsilon)\to 0 $ as $\epsilon\to 0$, for $i=1,2$. Furthermore, thanks to the properties of $\eta$ and $\mude$, the integrals above are well-defined.

To obtain the final estimate, we consider $\varphi=u-\ue$ and recall that for the kernels considered in this work the energy norm $|||\cdot|||$ is equivalent to the $L^2$ norm. In particular there exists a positive constant $C_{eq}$ such that $\|\varphi\|_{L^2(\Omega\cup\Gamma)}\leq |||\varphi|||$. Thus, we have the following estimate
\begin{displaymath}
\begin{aligned}
\|u-\ue\|^2_{L^2(\Omg\cup\Gam)} & \leq C_{eq} |||u-\ue|||^2 \\
& = C_{eq} \mcA(u-\ue,u-\ue) \\
& \leq C_{eq} k(\epsilon) \|\ue\|_{L^2(\Omega\cup\Gam)} \|u-\ue\|_{L^2(\Omega\cup\Gam)},
\end{aligned}
\end{displaymath}
where $k(\epsilon)$ is obtained from the constants in \eqref{eq:bilin-form-estimate}. We finally conclude that
\begin{equation}\label{eq:eps-error}
\|u-\ue\|_{L^2(\Omg\cup\Gam)} \leq C_{eq} k(\epsilon) \|\ue\|_{L^2(\Omega\cup\Gam)},
\end{equation}
where the constant $k(\epsilon)$ is such that $k(\epsilon)\to 0$ as $\epsilon\to 0$.

\section{Finite element formulation}\label{sec:fem}
In this section we introduce a FE discretization of problem \eqref{eq:weak-homo}, highlight the associated computational challenges, and describe how the formulation introduced in the previous section helps circumventing them. 
Let $\mcT_h$ be a shape-regular triangulation of $\Omega\cup\Gamma$ into $N_L$ finite elements $\{\mcE_l\}_{l=1}^{N_L}$; the latter $\mcE_l$ can either be triangles and/or quadrilaterals in two dimensions and tetrahedra and/or hexaheadra in three dimensions\footnote{See \cite{DEliaFEM2020} for a description of appropriate triangulation techniques for nonlocal problems.}. The parameter $h$ represents the size of the triangulation and corresponds to the larger element diameter. Also, let $V_0^{N_h}$ be a finite dimensional subspace of $V_0$ of dimension $N_h$, proportional to $h^{-1}$, and let $\{\varphi_i\}_{i=1}^{N_h}$ be a basis for $V_0^{N_h}$. In this work we consider Lagrange basis functions over the triangulation $\mcT_h$. Thus, we can write the FE solution $w_h$ of equation \eqref{eq:weak-homo} as $w_h(\xb) = \sum_{i=1}^{N_h}W_i \varphi_i(\xb)$. By using this expression and $\varphi\in\{\varphi_i\}_{i=1}^{N_h}$, equation \eqref{eq:weak-homo} reduces to the algebraic system
\begin{equation}
A {\mathbf W} = {\mathbf F},
\end{equation}
where ${\mathbf W}\in\mbR^{N_h}$ is the vector whose components are the degrees of freedom of the numerical solution $w_h$, $\mathbf F$ is such that ${\mathbf F}_i= \widetilde \mcF(\varphi_i)$, and $A$ is the stiffness matrix with entries
\begin{equation}\label{eq:Aij}
A_{ij} = \mcA(\varphi_i,\varphi_j)=
\iint\limits_{(\Omega\cup\Gamma)^2}  \left(\varphi_i(\xb)-\varphi_i(\yb)\right)\left(\varphi_j(\xb)-\varphi_j(\yb)\right)\gamma(\xb,\yb)  d\yb\,d\xb.
\end{equation}

\subsection{Circumventing computational challenges}
The computation of the entries of the stiffness matrix $A$ raises several diverse challenges. In this work we specifically focus on the challenges related to the presence of the indicator function in the definition of the kernel. Other challenges, such as the presence of singularities in fractional-type kernels or peridynamics kernels are not considered here. We point out that our method can be combined with any technique that takes into account the presence of the singularity. In fact, while the singularity is located at the center of the ball, the issues considered in this paper arise at the boundary. To make our description clear, we rewrite \eqref{eq:Aij} by explicitly indicating the domain of integration, i.e.
\begin{equation}\label{eq:Aij-support}
\begin{aligned}
A_{ij} & =
C_\delta \int\limits_{\Omega\cup\Gamma} 
\int\limits_{(\Omega\cup\Gamma)\cap B_\delta(\xb)} 
\left(\varphi_i(\xb)-\varphi_i(\yb)\right)\left(\varphi_j(\xb)-\varphi_j(\yb)\right)\eta(\xb,\yb)  d\yb\,d\xb\\
& = C_\delta \sum\limits_{l=1}^{N_L} \sum\limits_{k=1}^{N_L} 
\,\intl_{\mcE_l} \intl_{\mcE_k\cap B_\delta(\xb)}
\left(\varphi_i(\xb)-\varphi_i(\yb)\right)\left(\varphi_j(\xb)-\varphi_j(\yb)\right)\eta(\xb,\yb)  d\yb\,d\xb,
\end{aligned}
\end{equation}
where we split the integrals over the elements with the purpose of using composite quadrature rules. In fact, global quadrature rules used, e.g., over the ball $B_\delta(\xb)$ for the inner integration are not convenient due to the basis functions' bounded support \cite{DEliaFEM2020}. 

It is evident that the first challenge that one has to face is the integration over partial elements: when the element $\mcE_k$ is not fully contained in the ball, standard quadrature rules such as Gauss quadrature rules defined over $\mcE_k$ are not suitable due to the presence of the discontinuity induced by the indicator function. Thus, it is necessary to determine the intersection regions $\mcE_k\cap B_\delta(\xb)$ and define quadrature rules there. This task, while affordable in two dimensions, becomes extremely complex and impractical in three dimensions. Furthermore, when $B_\delta(\xb)$ is a Euclidean ball, such regions are curved so that appropriate approximations or quadrature rules for curved domains must be taken into account \cite{DEliaFEM2020}. A key observation is that these issues do not arise in case of smooth kernel functions, e.g. functions that do not abruptly jump to zero outside of $B_\delta(\xb)$, but that approach zero smoothly. This would allow the use of quadrature rules defined over the whole element $\mcE_k$, circumventing the issue of determining intersections or integrating over curved regions. 

The parametrized kernel introduced in Section \ref{sec:weak-form} is such that the transition to zero happens smoothly (as an example, for constant kernel functions $\eta$, the kernel function is a piece-wise polynomial in $C^4$. 
Thus, the inner integration can be performed over the whole element $\mcE_k$, using accurate enough quadrature rules, without worrying about the presence of a discontinuity. We then propose to solve the approximate, parametrized problem
\begin{equation}
\mcA_\epsilon(w_{h,\epsilon},\varphi_i) = \widetilde F(\varphi_i), 
\quad \forall\,i=1,\ldots N_h,
\end{equation}
for which the entries of the stiffness matrix, that, with an abuse of notation, we still denote by $A$, read
\begin{equation}\label{eq:Aij-support2}
\begin{aligned}
A_{ij} & =
C_{\delta,\epsilon} \sum\limits_{l=1}^{N_L} \sum\limits_{k=1}^{N_L} 
\,\intl_{\mcE_l} \intl_{\mcE_k}
\left(\varphi_i(\xb)-\varphi_i(\yb)\right)\left(\varphi_j(\xb)-\varphi_j(\yb)\right)\eta(\xb,\yb)\mude(\xb,\yb)  d\yb\,d\xb.
\end{aligned}
\end{equation}
By avoiding the problem of determining intersecting elements, this approach makes three-dimensional implementation a much simpler task. 

\begin{remark}
The convergence of the solution $w_{h,\epsilon}$ to the continuous solution $w$ depends on both the discretization parameter $h$ and the mollifying parameter $\epsilon$. An adaptive quadrature procedure, introduced in the following section, will further contribute to the overall approximation error, as we discuss and illustrate in Section \ref{sec:numerics}.
\end{remark}

\section{Adaptive quadrature rules}\label{sec:adaptivity}
As already pointed out, the use of the mollifier, in place of the characteristic function, removes the difficulty of integrating discontinuous functions. The transition region of the mollifier has thickness $2 \varepsilon$ and it is important to choose quadrature rules that can appropriately capture this region, especially if $\varepsilon \ll h$. Note that the presence of the transition region affects the regularity of both the inner and the outer integrands. As we explain below, only one adaptive rule is necessary, applied to either the outer or inner integral. A quadrature rule with few points can be fast but is also inaccurate, one with many points can be accurate but is also expensive, especially in higher dimensions. To this end, adaptive quadrature rules have been proven to be accurate and efficient \cite{mousavi2012efficient}. The advantage of using an adaptive scheme is that only the portion of the element overlapping with the transition region needs to be recursively refined, considerably reducing the computational time. Moreover,  for fixed $\epsilon$, each partitioning has the effect of halving the ratio $h/\varepsilon$. Thus, it is always possible to chose a number of adaptive refinements such that $h \approx \varepsilon$ and for which a quadrature rule with few points is accurate enough. Note that the adaptive quadrature rule devised here is not standard, because the refinement criterion is controlled by the distance between points in the outer and inner integrals. The details of the algorithm are given below.

We recall that we denote by $A$ the stiffness matrix corresponding to the parametrized bilinear form $\mcA_\epsilon$. It is convenient to rewrite its entries as $A_{ij} = A^{11}_{ij}+A^{12}_{ij}+A^{21}_{ij}+A^{22}_{ij}$, where each term is given by
\begin{align}
A^{11}_{ij} &= \int\limits_{\Omega\cup\Gamma} \int\limits_{\Omega\cup\Gamma} \gamma_{\epsilon}(\xb,\yb)\,\varphi_i(\xb) \varphi_j(\xb) d\yb\,d\xb,\label{J11}\\
A^{12}_{ij} &=-\int\limits_{\Omega\cup\Gamma} \int\limits_{\Omega\cup\Gamma} \gamma_{\epsilon}(\xb,\yb)\,\varphi_i(\xb) \varphi_j(\yb) d\yb\,d\xb,\label{J12}\\
A^{21}_{ij} &=-\int\limits_{\Omega\cup\Gamma} \int\limits_{\Omega\cup\Gamma} \gamma_{\epsilon}(\xb,\yb)\,\varphi_i(\yb) \varphi_j(\xb) d\yb\,d\xb,\label{J21}\\
A^{22}_{ij} &=\int\limits_{\Omega\cup\Gamma} \int\limits_{\Omega\cup\Gamma} \gamma_{\epsilon}(\xb,\yb)\,\varphi_i(\yb) \varphi_j(\yb) d\yb\,d\xb.\label{J22}
\end{align}

\smallskip
The following proposition allows us to express $A_{ij}$ only as a sum of two of the terms above, as we show in Corollary \ref{cor:Aequality}.
\begin{proposition} \label{equalityOfJij}
Let $f_1$ and $f_2 \in V$, and let $g(\xb,\yb)$ be a symmetric function, then 
\begin{align}
&\int\limits_{\Omega\cup\Gamma} \int\limits_{\Omega\cup\Gamma} g(\xb,\yb)\,f_1(\xb) f_2(\yb) d\yb\,d\xb =\int\limits_{\Omega\cup\Gamma} \int\limits_{\Omega\cup\Gamma} g(\xb,\yb)\,f_1(\yb) f_2(\xb) d\yb\,d\xb.
\end{align}
\end{proposition}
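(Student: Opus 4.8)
The plan is to prove the identity by a simple relabeling of the dummy integration variables combined with the symmetry hypothesis on $g$; no structural property of $f_1$ and $f_2$ beyond integrability is required. Because both the inner and outer integrals range over the same set $\Omega\cup\Gamma$, the variables $\xb$ and $\yb$ play interchangeable roles, and this interchangeability is the only mechanism the proof exploits.

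Concretely, I would start from the left-hand side and perform the formal substitution that swaps the names of the two integration variables, $\xb \leftrightarrow \yb$. Since the domain of each integral is unchanged under this relabeling, the left-hand side becomes $\iint g(\yb,\xb)\, f_1(\yb) f_2(\xb)\, d\xb\, d\yb$, where the outer integration is now in $\yb$ and the inner one in $\xb$. I would then invoke the assumed symmetry $g(\xb,\yb)=g(\yb,\xb)$ to replace $g(\yb,\xb)$ by $g(\xb,\yb)$, and finally reorder the two integrals back to the form $d\yb\,d\xb$, which recovers the right-hand side exactly.

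The only point requiring care --- and hence the main, if modest, obstacle --- is the justification of the interchange in the order of integration. This is where the hypothesis $f_1,f_2\in V$ enters: since $V$ is equivalent (and in particular continuously embedded in) $L^2(\Omega\cup\Gamma)$, and since in the intended application $g=\gamma_\epsilon$ is integrable and compactly supported, the integrand $g(\xb,\yb)\, f_1(\xb) f_2(\yb)$ is absolutely integrable over $(\Omega\cup\Gamma)^2$. Fubini's theorem then applies and legitimizes the reordering of the iterated integrals. Everything else is a purely notational change of variables, so I expect the argument to be short, with the integrability check being the only substantive ingredient.
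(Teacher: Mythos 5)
Your argument is essentially identical to the paper's proof: both rest on swapping the order of the iterated integrals and renaming the dummy variables, then invoking the symmetry of $g$ (you merely perform the relabeling before the reordering rather than after). The only difference is that you explicitly justify the interchange via Fubini and the $L^2$ embedding of $V$, a step the paper leaves implicit; this is a welcome addition but does not change the substance of the argument.
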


\begin{proof}
\begin{align}
&\int\limits_{\Omega\cup\Gamma} \int\limits_{\Omega\cup\Gamma} g(\xb,\yb)\,f_1(\xb) f_2(\yb) d\yb\,d\xb & \hspace{2cm}  \,\nonumber\\
=& \int\limits_{\Omega\cup\Gamma} \int\limits_{\Omega\cup\Gamma} g(\xb,\yb)\,f_1(\xb) f_2(\yb) d\xb\,d\yb & \mbox{reversing the order}\nonumber\\
=& \int\limits_{\Omega\cup\Gamma} \int\limits_{\Omega\cup\Gamma} g(\xb,\yb)\,f_1(\yb) f_2(\xb) d\yb\,d\xb &
\begin{array}{r}
\mbox{renaming variables} \\
\mbox{and using the symmetry of $g$}
\end{array}\nonumber
\end{align}
\end{proof}
\begin{corollary}\label{cor:Aequality}
The entries of the stiffness matrix $A$ satisfy the following equality
$$A_{ij} = 2 A^{11}_{ij} + 2 A^{12}_{ij} = 2 A^{21}_{ij} + 2 A^{22}_{ij}$$. 
\end{corollary}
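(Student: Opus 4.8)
The plan is to apply Proposition \ref{equalityOfJij} to the parametrized kernel $\gamma_\epsilon$, noting first that $\gamma_\epsilon$ is a legitimate choice for the symmetric function $g$: by construction $\game(\xb,\yb)=C_{\delta,\epsilon}\,\eta(\xb,\yb)\,\mude(\xb,\yb)$ is radial, hence $\gamma_\epsilon(\xb,\yb)=\gamma_\epsilon(\yb,\xb)$. The basis functions $\varphi_i,\varphi_j\in V_0^{N_h}\subset V$ play the role of $f_1$ and $f_2$. So the whole corollary reduces to matching the four terms \eqref{J11}--\eqref{J22} against the identity in the proposition and collecting.

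I would first dispatch the cross terms $A^{12}_{ij}$ and $A^{21}_{ij}$, since these fit the template of Proposition \ref{equalityOfJij} verbatim. Taking $g=\gamma_\epsilon$, $f_1=\varphi_i$, $f_2=\varphi_j$, the proposition gives $\iint \gamma_\epsilon\,\varphi_i(\xb)\varphi_j(\yb)\,d\yb\,d\xb = \iint \gamma_\epsilon\,\varphi_i(\yb)\varphi_j(\xb)\,d\yb\,d\xb$; multiplying through by the common factor $-1$ yields $A^{12}_{ij}=A^{21}_{ij}$ directly.

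Next I would establish $A^{11}_{ij}=A^{22}_{ij}$. The mild subtlety here — and the only place the argument deviates from a one-line appeal to the proposition — is that in $A^{11}_{ij}$ both basis functions are evaluated at the same point $\xb$, so the integrand has the form $\varphi_i(\xb)\varphi_j(\xb)$ rather than the separated form $f_1(\xb)f_2(\yb)$ required by Proposition \ref{equalityOfJij}. I would therefore argue directly, repeating the same three-step manipulation used in the proof of the proposition: reverse the order of integration in \eqref{J11}, rename the dummy variables $\xb\leftrightarrow\yb$, and invoke the symmetry $\gamma_\epsilon(\yb,\xb)=\gamma_\epsilon(\xb,\yb)$. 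This transforms $\iint \gamma_\epsilon\,\varphi_i(\xb)\varphi_j(\xb)\,d\yb\,d\xb$ into $\iint \gamma_\epsilon\,\varphi_i(\yb)\varphi_j(\yb)\,d\yb\,d\xb$, i.e. $A^{11}_{ij}=A^{22}_{ij}$.

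Finally I would substitute the two equalities into $A_{ij}=A^{11}_{ij}+A^{12}_{ij}+A^{21}_{ij}+A^{22}_{ij}$. Replacing $A^{21}_{ij}$ by $A^{12}_{ij}$ and $A^{22}_{ij}$ by $A^{11}_{ij}$ gives $A_{ij}=2A^{11}_{ij}+2A^{12}_{ij}$, while replacing $A^{11}_{ij}$ by $A^{22}_{ij}$ and $A^{12}_{ij}$ by $A^{21}_{ij}$ gives $A_{ij}=2A^{21}_{ij}+2A^{22}_{ij}$, which is exactly the claimed chain of equalities. I do not anticipate a genuine obstacle: the only point deserving care is the $A^{11}_{ij}=A^{22}_{ij}$ step, where the proposition does not apply literally and the direct reversal-and-relabel argument must be spelled out; everything else is bookkeeping that relies only on the radiality of $\gamma_\epsilon$ and the membership $\varphi_i,\varphi_j\in V$.
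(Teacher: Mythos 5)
Your proposal is correct and follows essentially the same route as the paper: decompose $A_{ij}$ into the four terms, establish $A^{12}_{ij}=A^{21}_{ij}$ and $A^{11}_{ij}=A^{22}_{ij}$ using the symmetry of $\gamma_\epsilon$ via Proposition \ref{equalityOfJij}, and collect. If anything you are slightly more careful than the paper, which asserts $A^{11}_{ij}=A^{22}_{ij}$ as an immediate consequence of the proposition even though the integrand $\varphi_i(\xb)\varphi_j(\xb)$ is not literally of the separated form $f_1(\xb)f_2(\yb)$; your explicit reverse-and-relabel argument closes that small gap.
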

\begin{proof}
The proof follows from the definition of $A_{ij}$ and Proposition \ref{equalityOfJij}. Namely, $A^{11}_{ij}=A^{22}_{ij} \mbox{ and } A^{12}_{ij}=A^{21}_{ij}.$
\end{proof}

Let $Q_1=\{(\xb_{q_1},w_{q_1})\}_{q_1}$ and $Q_2=\{(\xb_{q_2},w_{q_2})\}_{q_2}$ denote sets of quadrature points and associated weights representing  two different {\it composite quadrature} rules for the numerical integration over the region $\Omega \cup \Gamma$. To preserve the equality between $A^{11}_{ij}$ and $A^{22}_{ij}$, and between $A^{12}_{ij}$ and $A^{21}_{ij}$ we
use $Q_1$ and $Q_2$ to numerically evaluate the inner and outer integrals as follows:
\begin{align}
&& A_{ij}^{11} = \sum_{q_1 \in Q_1} \sum_{q_2 \in Q_2} \gamma_{\epsilon}(\xb_{q_1},\xb_{q_2})\,\varphi_i(\xb_{q_1}) \varphi_j(\xb_{q_1}) w_{q_2}{w_{q_1}}, \label{J11N}\\
&& A_{ij}^{12} = \sum_{q_1 \in Q_1} \sum_{q_2 \in Q_2} \gamma_{\epsilon}(\xb_{q_1},\xb_{q_2})\,\varphi_i(\xb_{q_1}) \varphi_j(\xb_{q_2}) w_{q_2}{w_{q_1}},\label{J12N}\\
&& A_{ij}^{21} = \sum_{q_2 \in Q_2} \sum_{q_1 \in Q_1} \gamma_{\epsilon}(\xb_{q_2},\xb_{q_1})\,\varphi_i(\xb_{q_1}) \varphi_j(\xb_{q_2}) w_{q_1}{w_{q_2}},\label{J21N}\\
&& A_{ij}^{22} = \sum_{q_2 \in Q_2} \sum_{q_1 \in Q_1} \gamma_{\epsilon}(\xb_{q_2},\xb_{q_1})\,\varphi_i(\xb_{q_1}) \varphi_j(\xb_{q_1}) w_{q_1}{w_{q_2}}\label{J22N}.
\end{align}

According to Corollary \ref{cor:Aequality}, only two terms among the ones above need to be computed. We choose to compute $A_{ij}^{21}$ and $A_{ij}^{22}$ in \eqref{J21N} and \eqref{J22N}. We adopt an adaptive scheme for the outer quadrature and Gaussian composite quadrature rules for the inner integration over the elements that intersect the ball. 
These choices are empirical, i.e. they have been guided by numerical experiments that showed that for $A_{ij}^{21}$ and $A_{ij}^{22}$ using an adaptive quadrature for the outer integral is more efficient than using it for the inner one.

For every $\mathcal{E}_l\in\mcT_h$ we define
\begin{align}\label{k_l}
    \mathcal{K}_l = \{ m\in\{1,\ldots,N_L\}:  \| \xb-\yb\|_{\ell_\infty} \ge \delta+ \varepsilon, \,\,  \forall \xb \in \mathcal{E}_l,  \forall \yb \in \mathcal{E}_m\,\,\},
\end{align}
and $\mathcal{J}_l = \mathcal{K}^c_l$, i.e. the complement of $\mathcal{K}_l$ in $\{1,\ldots,N_L\}$.
For $k=1,2$ and $l=1,\ldots,N_L$ let $Q_k^l=\{(\xb_{q_{k_l}},w_{q_{k_l}})\}_{q_{k_l}} \subset Q_{k}$ denote the subset of $Q_k$ composed of those quadrature points and weights obtained by only considering the quadrature points (and associated weights) that lie within element $\mathcal{E}_l$.  
Then, integrals \eqref{J21N} and \eqref{J22N} can be rewritten as
\begin{align}
&& A_{ij}^{21} = \sum_{l=1}^{N_L} \sum_{q_{2_l} \in Q_2^l } \sum_{m \in \mathcal{J}_l} \sum_{q_{1_m} \in Q_1^m} \gamma_{\epsilon}(\xb_{q_{2_l}},\xb_{q_{1_m}})\,\varphi^{m}_i(\xb_{q_{1_m}}) \varphi^{l}_j(\xb_{q_{2_l}}) w_{q_{1_m}}{w_{q_{2_l}}},\label{J21Nr}\\
&& A_{ij}^{22} = \sum_{l=1}^{N_L} \sum_{q_{2_l} \in Q_2^l } \sum_{m \in \mathcal{J}_l} \sum_{q_{1_m} \in Q_1^m} \gamma_{\epsilon}(\xb_{q_{2_l}},\xb_{q_{1_m}})\,\varphi^{m}_i(\xb_{q_{1_m}}) \varphi^{l}_j(\xb_{q_{1_m}}) w_{q_{1_m}}{w_{q_{2_l}}}\label{J22Nr}.    
\end{align}
Note that in the equations above the terms in the sum are nonzero only when 
the support of the basis functions intersects the elements.
An adaptive quadrature with midpoint refinement is adopted for $Q_2^l$. 
The pseudo-code that describes the adaptivity algorithm is reported in Algorithm \ref{pseudo}: we employ recursive calls with input arguments $L_{min}$, $L_{max}$, $L_{cur}$, $\mathcal{E}_l$, and $\mathcal{J}_l$. In each call $L_{min}$ and $L_{max}$ are fixed parameters, and represent the minimum and the maximum level of refinement, with $L_{max} \ge L_{min} \ge 1$. $L_{cur}$ is the current level of refinement.
In the initial call $\mathcal{E}_l$ and $\mathcal{J}_l$ are the ones defined above and $L_{cur} =1$, 
while in the recursive calls these three arguments are subject to changes as described below.

\begin{itemize}
\item If $L_{cur} < L_{min}$, then $\mathcal{E}_l$ is split in $2^N$ sub-elements $\mathcal{E}_{l_i}$ using the midpoint rule and for each of them the adaptive integration function is called again increasing $L_{cur}$ by one and using the same index set $\mathcal{J}_l$.
\item If $L_{cur} = L_{max}$ integration is performed on $\mathcal{E}_{l}$ for the outer integral and on each element indexed by $\mathcal{J}_l$ for the inner integral, without any further refinement for $\mathcal{E}_{l}$. The numerical integration is performed using standard Gauss Legendre quadrature rules both for the outer and inner integrals.

\item If $L_{min} \le L_{cur} < L_{max}$, from the index set $\mathcal{J}_l$ two new index sets
$\mathcal{J}_l^{int}$ and $\mathcal{J}_l^{ref}$ are extracted, for which $\mathcal{E}_{l}$ is either integrated or further refined.
For any $m$ in $\mathcal{J}_l$, the maximum distance from $\mathcal{E}_{m}$ to $\mathcal{E}_{l}$ is computed. If this distance is less than $\delta - \varepsilon$, then $m$ is added to $\mathcal{J}_l^{int}$, otherwise the minimum distance from $\mathcal{E}_{m}$ to $\mathcal{E}_{l}$ is computed. If this distance is less than $\delta + \varepsilon$ then $m$ is added to the index set $\mathcal{J}_l^{ref}$.
If $\mathcal{J}_l^{int}$ is non-empty, integration is performed on $\mathcal{E}_{l}$ for the outer integral and on each element indexed by $\mathcal{J}_l^{int}$ for the inner integral. 
If $\mathcal{J}_l^{ref}$ is non-empty, then $\mathcal{E}_{l}$ is split in $2^N$ sub-elements $\mathcal{E}_{l_i}$ and for each of them the adaptive integration function is called again increasing $L_{cur}$ by one and using $\mathcal{J}_l^{ref}$ as index set. 
\end{itemize}

\begin{algorithm}
\label{pseudo}
\begin{algorithmic}
  \Function {Adaptive Integration}{$L_{min}$, $L_{max}$, $L_{cur}$, $\mathcal{E}_l$, $\mathcal{J}_l$}
    \If {$L_{cur}<L_{min}$}
      \State {split $\mathcal{E}_l$ into $2^N$ new sub-elements $\mathcal{E}_{l_i}$}
      \For{$i=1,\dots,2^N$ }
        \State{ \sc{Adaptive Integration}($L_{min}$, $L_{max}$, $L_{cur} + 1$, $\mathcal{E}_{l_i}$, $\mathcal{J}_l$)}
      \EndFor
    \ElsIf {$L_{cur} =  L_{max}$}
      \State{\sc{Integration}($\mathcal{E}_l$, $\mathcal{J}_l$)}
    \Else
      \State{$\mathcal{J}^{int}_l = \varnothing$}
      \State{$\mathcal{J}^{ref}_l = \varnothing$}
      \ForAll{$m \in \mathcal{J}_l$}
        \If{$\mbox{max dist}(\mathcal{E}_l,\mathcal{E}_m) < \delta - \varepsilon$}
          \State{add $m$ to the index set $\mathcal{J}^{int}_l $}
        \ElsIf{$\mbox{min dist}(\mathcal{E}_l,\mathcal{E}_m) < \delta + \varepsilon$}
          \State{add $m$ to the index set $\mathcal{J}^{ref}_l$}
        \EndIf
      \EndFor
      \If{$\mathcal{J}^{int}_l \ne \varnothing$}
        \State{\sc{Integration}($\mathcal{E}_l,$ $\mathcal{J}^{int}_l)$}
      \EndIf
      \If{$\mathcal{J}^{ref}_l \ne \varnothing$}
        \State{split $\mathcal{E}_l$ into $2^N$ new sub-elements $\mathcal{E}_{l_i}$}
        \For{$i=1,\dots,2^N$ }
          \State{\sc{Adaptive Integration}($L_{min}$, $L_{max}$, $L_{cur} + 1$, $\mathcal{E}_{l_i}$, $\mathcal{J}_l^{ref}$)}
        \EndFor
      \EndIf
    \EndIf
  \EndFunction
\end{algorithmic}
\end{algorithm}

\subsection{Approximate maximum and minimum distances between elements \label{AppDistSec}}
Evaluating the exact distances between two elements can be computationally expensive, especially for unstructured three-dimensional meshes. Hence, in practice, we use conservative distances that are simple to compute in place of the maximum and the minimum. Namely, we first loop  over the nodes of each element
to find the minimum and maximum coordinates in each dimension, denote them by $\xb_{\min}$, $\xb_{\max}$ and by $\yb_{\min}$, $\yb_{\max}$. Here $\xb_{\min}$ and $\xb_{\max}$ are the vectors containing the minimum and maximum coordinates of the bounding box containing $\mathcal{E}_l$. Similarly, $\yb_{\min}$ and $\yb_{\max}$ are the vectors containing the minimum and maximum coordinates of the bounding box containing $\mathcal{E}_m$.
For each dimension $k=1,\ldots,N$ evaluate the two quantities $d_1^k = x^k_{\min} - y^k_{\max}$ and $d_2^k = y^k_{\min} - x^k_{\max}$. Finally, we approximate the maximum and minimum distances with the two quantities
\begin{align}
& \mbox{aprx max dist}(\mathcal{E}_l,\mathcal{E}_m) = \sqrt{ \sum_{k=1,\dots,N} \max \left( {d_1^k}^2,\, {d_2^k}^2 \right) },\label{maxdist}\\
& \mbox{aprx min dist}(\mathcal{E}_l,\mathcal{E}_m) = \max_{k=1,\dots,N} \left( \max \left( 0, d_1^k,\, d_2^k\right) \right) . \label{mindist}
\end{align}

The approximate maximum distance in \eqref{maxdist} is the maximum among the distances between opposite vertices of the two bounding boxes.
For example in two dimensions it would be one among the distances $\|SW - NE\|_{\ell_2}$, $\|SE - NW\|_{\ell_2}$, $\|NE - SW\|_{\ell_2}$ and $\|NW - SE\|_{\ell_2}$, with S meaning South, N meaning North and so on. This is true regardless of the reciprocal position of the two boxes.
To better understand \eqref{mindist}, consider the projections of the bounding boxes in the direction of $k$. Recall that, for fixed $k$, at least one between $d_1^k$ and $d_2^k$ is always negative. The other is positive only if the 2 projections do not overlap. In this case, 
$\max \left( 0, d_1^k,\, d_2^k\right)$ is the minimum (positive) distance between 
the 2 non-overlapping projections. Finally, we approximate the minimum distance with the largest projected distance.

It is easy to see that
\begin{align}
& \mbox{aprx max dist}(\mathcal{E}_l,\mathcal{E}_m)  \ge \mbox{max dist}(\mathcal{E}_l,\mathcal{E}_m) ,\label{maxdistin}\\
& \mbox{aprx min dist}(\mathcal{E}_l,\mathcal{E}_m)  \le \mbox{min dist}(\mathcal{E}_l,\mathcal{E}_m), \label{mindistin}
\end{align}
where the sign of the inequalities assures the conservative approach in the adaptive integration algorithm. For a kernel whose support is identified by a ball in the topology defined by the $L^2$ norm, the minimum distance could be also approximated by
$$\mbox{aprx min dist}(\mathcal{E}_l,\mathcal{E}_m) = \sqrt{ \sum_{k=1,\dots,N} \max \left(0, d_1^k,\, d_2^k\right)^2}.$$ 
This last would give a sharper inequality in \eqref{mindistin}, however it would not work for the case where the support of the kernel is an $L^{\infty}$ ball. In this work, for generality, we have chosen to always use formula \eqref{mindist}. Note that \eqref{mindist} is also used in \eqref{k_l} in place of $\| \cdot \|_{l_{\infty}}$ to identify the elements indexed by  $\mathcal{J}_l$.

\section{Numerical results}\label{sec:numerics}
In this section, we present the results of numerical tests for FE
discretizations of two-dimensional ($n=2$) and three-dimensional ($n=3$) problems. These results allow us to illustrate the theoretical results presented in the previous sections and highlight the efficiency of our approach.

We first show the consistency of the proposed method; specifically, fixing the mesh and letting the maximum level of adaptive refinement $L_{max}$ increase, we study the behavior of the discretization error with respect to an analytic solution that belongs to the FE space. 
Then, we investigate the convergence of discretized solutions to the continuous one as the mesh is refined. 
To better understand the behavior of the algorithm and the specific sources of error, we devised a specific numerical test to isolate the error induced by the presence of the mollifier and analyze the convergence behavior with respect to $\varepsilon$. Accuracy comparisons with the algorithm proposed in the paper by D'Elia et al. \cite{DEliaFEM2020} are also provided.

Due to the intrinsically high computational costs of nonlocal simulations, a parallel implementation of the algorithm is proposed and its scalability properties are analyzed both in two and three dimensions.

The two-dimensional tests are carried out on quadrilateral, triangular, and mixed meshes, i.e. meshes consisting of both quadrilateral and triangular elements. The three-dimensional simulations are carried out on a hexahedral mesh.

We consider constant kernels supported on Euclidean balls of radius $\delta$. To guarantee the consistency of the nonlocal diffusion operator with the classical Laplacian for polynomials up to degree three and its convergence to the classical Laplacian as $\delta\to 0$, we select the constants $C_\delta$ and $C_{\delta,\varepsilon}$ in \eqref{eq:gamma} and \eqref{eq:approx-kernel}, respectively, as follows
$$
\begin{aligned}
n=2:\quad & 
C_{\delta} = \dfrac{4 \kappa} {\pi  \delta^4}\,, &
C_{\delta, \varepsilon} = \dfrac{C_{\delta}}{1+\dfrac{6}{11}\left(\dfrac{\varepsilon}{\delta}\right)^2 + \dfrac{3}{143}\left(\dfrac{\varepsilon}{\delta}\right)^4},\\
n=3:\quad & 
C_{\delta} = \dfrac{15 \kappa} {4 \pi  \delta^5}\,, &
C_{\delta,\varepsilon} = \dfrac{C_{\delta}}{1+\dfrac{10}{11}\left(\dfrac{\varepsilon}{\delta}\right)^2 + \dfrac{15}{143}\left(\dfrac{\varepsilon}{\delta}\right)^4}.
\end{aligned}
$$
For sufficiently smooth $u$, these choices guarantee quadratic convergence of the nonlocal Laplacian to the local one for $\delta\rightarrow 0$:
$$ \mcL u(\xb) = \Delta u(\xb) + O(\delta^2)
\quad \hbox{and} \quad 
\mcL_\varepsilon u(\xb) = \Delta u(\xb) + O(\delta^2).
$$

\paragraph{Sources of numerical error}
Let $u_h$ be the FE solution; the numerical error can heuristically be split into three separate contributions, i.e.
\begin{equation}\label{errorSum}
\|u-u_h\|_{L^2(\Omega\cup\Gamma)} \leq C_1 h^{p_1} + C_2 \varepsilon^{p_2} + E_i(\varepsilon, h, L_{max}),   
\end{equation}
where, $h$ is the characteristic size of the mesh, $C_1$ and $C_2$ are positive constants independent of $h$, $\varepsilon$ and $L_{max}$, and $p_1$ and $p_2$ are positive integers that represent the rates of convergence with respect to $h$ and $\varepsilon$, respectively. The first term on the right-hand side is the interpolation error and depends on the FE family used to discretize the problem\footnote{For convergence rates of FE discretizations in presence of integrable kernels, we refer the reader to \cite{du12}.}. The second term is the error induced by the presence of the mollifier and, in the $L^2$ metric, it is bounded by the expression in \eqref{eq:eps-error}. The last term, $E_i$, is the numerical integration error, that
for fixed external and internal Gauss quadrature rules depends on $\varepsilon$, $h$ and $L_{max}$.

For $h\rightarrow0$ or $L_{max}\rightarrow \infty$ the integration error $E_i$ vanishes, whereas for 
$\varepsilon \rightarrow 0$ it increases, since the transition to zero of the mollifier features higher gradients. In all simulations we empirically set the integration parameters so that the integration error $E_i$ could be negligible compared to $C_1h^{p_1}+C_2 \varepsilon^{p_2}$. In such a context, $\varepsilon$ cannot be selected independently of $h$ or $L_{max}$; explicit dependence is provided in each simulation.

\subsection{Two-dimensional tests}
\begin{figure}[t!]
   \centering
   \includegraphics[height=1.3in]{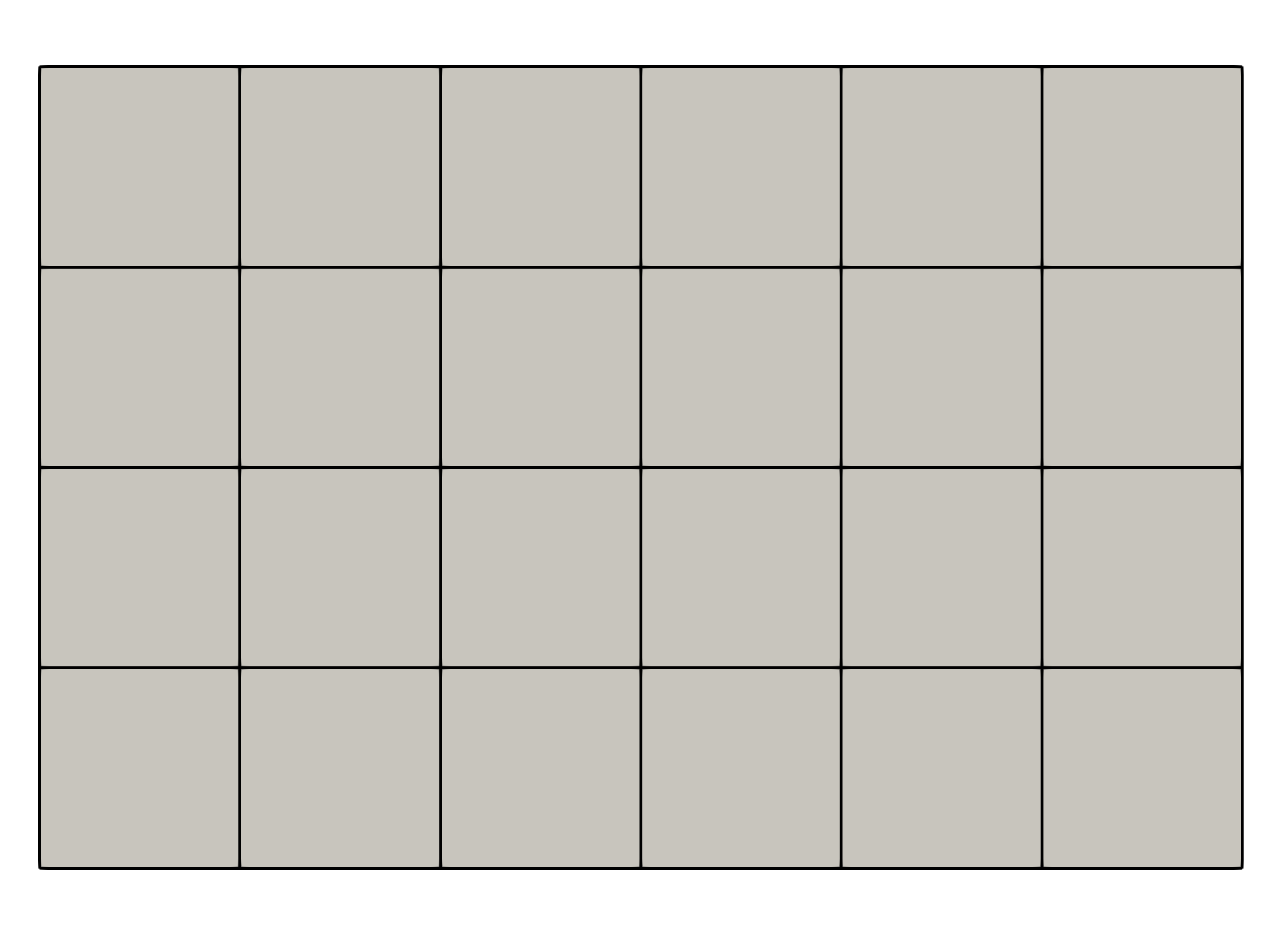}
   \quad
   \includegraphics[height=1.3in]{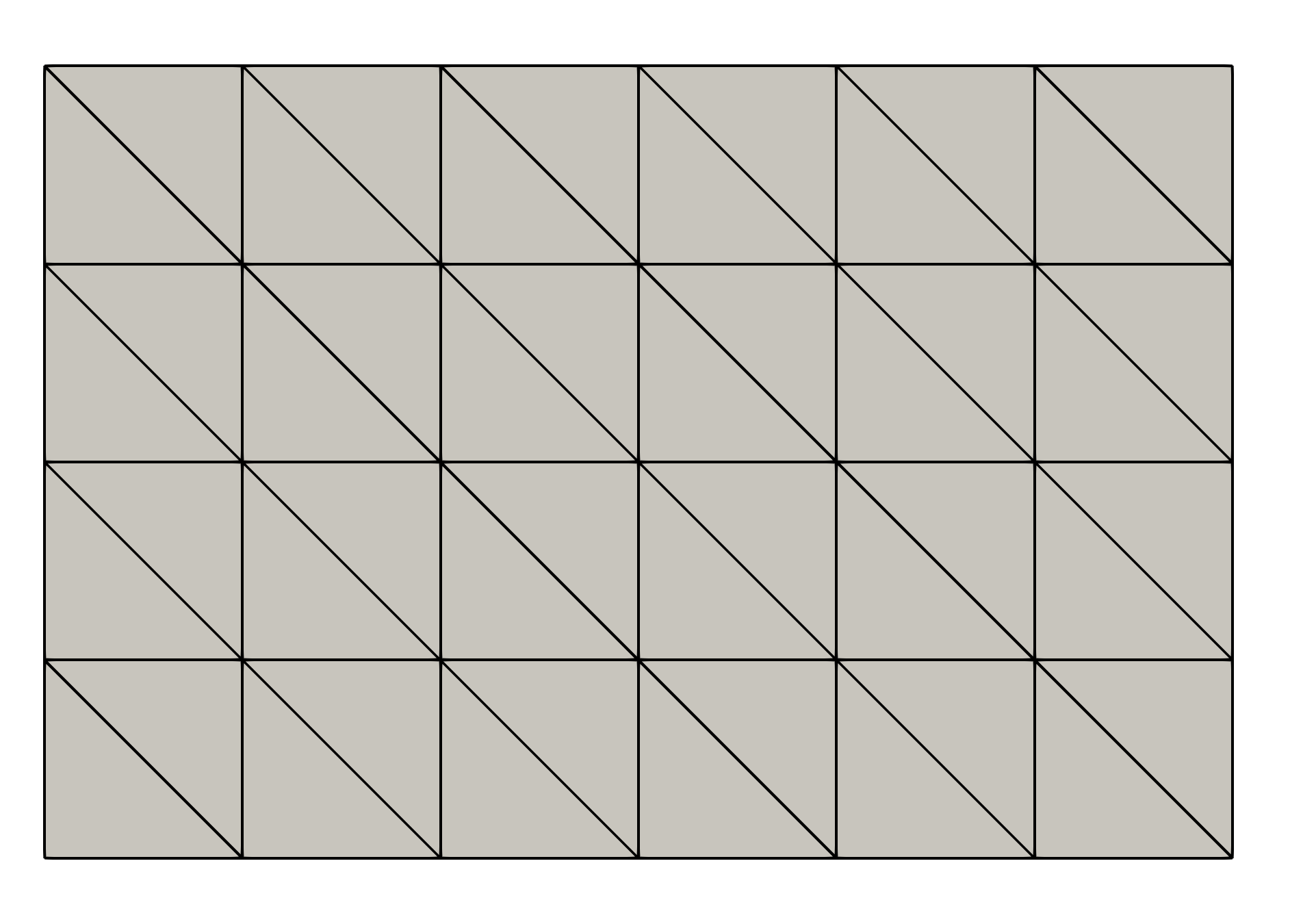}
   \quad
   \includegraphics[height=1.3in]{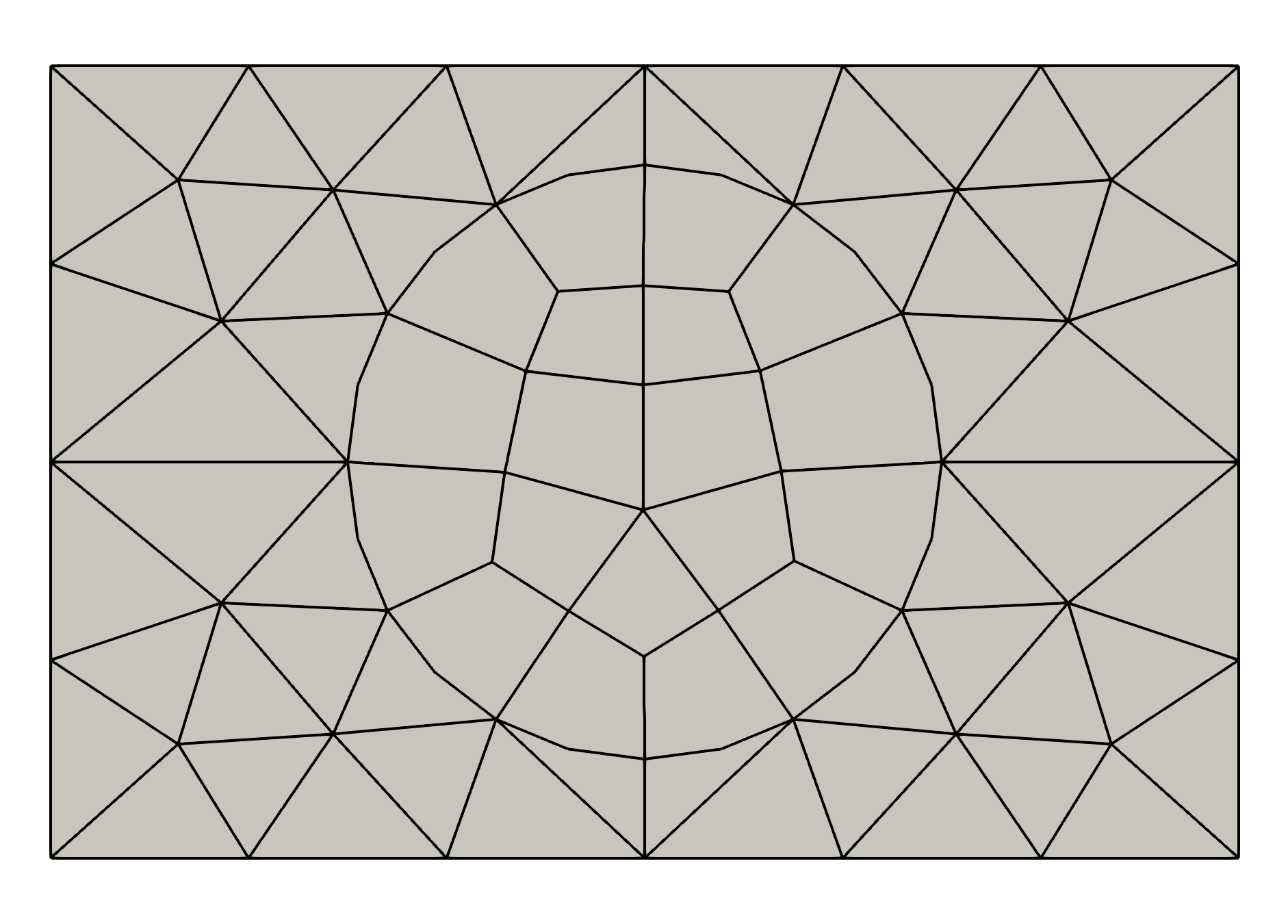}
\caption{Meshes used for the two-dimensional numerical simulations: quadrilateral (left, number of degrees of freedom with linear FE discretization $N_h = 35$, and $N_h = 93$ for quadratic FE discretization), triangular (center, $N_h = 35$ for linear FE, $N_h = 117$ for quadratic FE) and mixed (right, $N_h = 58$ for linear FE, $N_h = 190$ for quadratic FE).}
   \label{fig_domain}
\end{figure}
Two-dimensional numerical simulations of nonlocal operators can be found in several works in literature, see, e.g., \cite{Macek2007}, \cite{Wang2014}, and \cite{Vollman2019}. However, such studies are often designed for structured mesh only. On the other hand, our method can be applied to any type of mesh. 
We consider the domain $\Omega = [-0.6, 0.6] \times [-0.4, 0.4]$ and three different meshes, see Figure \ref{fig_domain} for a coarse example.
Linear and quadratic Lagrange FE spaces are considered.
Unless otherwise stated, we use Gauss-Legendre $3\times3$ product rule for quadrilateral elements and Dunavant 7-point rule for triangles.
In all tests we consider the error with respect to an analytic, manufactured solution, $u(\xb)$. 
For all $\xb \in \Gamma$, the nonlocal Dirichlet volume constraint is set to $g(\xb) =u(\xb)$ and the forcing term is known analytically as $f(\xb)=-\mcL u(\xb)$.

\paragraph{Consistency}
We choose $u\in V^{N_h}$, so that the interpolation error contribution in \eqref{errorSum} is identically zero, regardless of $h$. We compute the discretization error $\|u - u_h\|_{L^2}$ for a fixed mesh while increasing the values of the maximum level of adaptivity $L_{max}$.

For the linear manufactured solution $u=1+x+y$ the forcing term is $f(\xb)=0$.
In this case the numerical errors are always zero for both the linear and the quadratic FE spaces, for any $L_{max}$ and $\varepsilon$. Although welcome, this result is an over-achievement, as it is obtained only because the forcing terms are zero. Thus, it should not be taken as a reference.
For the quadratic manufactured solution $u=x^2+y^2$ the forcing term is $f(\xb)=-2$. 
In this case we consider only the quadratic FE space, because it is the only one that can reproduce exactly the solution.
We choose $\varepsilon$ as 
$$\varepsilon = \varepsilon_0 \left(\frac{3}{4}\right)^{L_{max}-3}$$
so that $\varepsilon\rightarrow 0 $ for $L_{max} \rightarrow \infty$, i.e. for $L_{max}\to\infty$ both the mollifier and the integration errors in \eqref{errorSum} vanish.

In Table \ref{tab_consistency1} we report the errors of the numerical tests for the 
quadrilateral (QUAD) and triangular (TRI) meshes after one refinement, together with all the values of the parameters $h$, $\delta$, $\varepsilon_0$ and $L_{\min}$. The $L^2$-norm of the error decreases down to machine precision, 
when $L_{max}$ is increased, for both quadrilateral and triangular meshes, illustrating the consistency of the implemented adaptivity method in two dimensions.

\begin{center}
\begin{table}[t]%
\centering
\caption{Consistency test: errors $\|u - u_h\|_{L^2}$ as $L_{max}$ increases for $u=x^2+y^2$, $h=0.1$, $\delta=0.2$, $\varepsilon_0=0.0125$, $L_{min} = 1$, $\varepsilon = \varepsilon_0 (3/4)^{L_{max}-3}$ and
 quadratic FE. \label{tab_consistency1}}
 \begin{tabular*}{150pt}{@{\extracolsep\fill}c|cc@{\extracolsep\fill}}
\toprule
$\mathbf{L_{max}}$ & \textbf{QUAD}     &   \textbf{TRI} \\
\midrule
        3   & 8.123E-05 &  2.713E-05 \\
        4   & 1.278E-05 &  2.052E-06\\
        5   & 3.453E-07 &  1.193E-07\\
        6   & 1.793E-09 &  1.774E-09 \\
        7   & 7.073E-10 &  5.479E-11 \\
\bottomrule
\end{tabular*}
\end{table}
\end{center}

\paragraph{$\boldsymbol h$-Convergence}
We consider the convergence with the respect to the grid size $h$ on quadrilateral, triangular and mixed meshes. We use the manufactured solution 
$u(\xb) = x^3 + y^3$
for  which  the  corresponding source term is given by $f(\xb) = -\mcL u(\xb)
.=
- \Delta u(\xb) = - 6 (x + y)$ for $\xb \in \Omega$.
We analyze the convergence of the finite FE adaptive nonlocal solution
$u_h$ to the analytic solution $u$ as we progressively halve the mesh size $h$ by operating on a parameter referred to as $ml$, as shown in \eqref{ml_eq}. We consider fixed $L_{max}$, $L_{min}$, and $\delta$,
whereas $h$ and $\varepsilon$ depend on $ml$ (mesh level) as follows 
\begin{align}\label{ml_eq}
h = h_0 \left(\frac{1}{2}\right)^{ml-2} 
\quad \hbox{and} \quad
\varepsilon =
\varepsilon_0\left(\frac{2}{3}\right)^{ml-2}.
\end{align}
In Table \ref{tab_mesh_convergence1} we report the values of $\|u - u_h\|_{L^2}$
and the corresponding rate of convergence $p$  as $ml$ grows, evaluated in full awareness with respect to $h$ only, with the approximate formula
\begin{equation}
\label{eq:convergence_h}
p \approxeq \ln\big({E(h)}/{E\left({h}/{2}\right)} \big)/\ln(2) \,.
\end{equation}
For linear FE discretization we obtain quadratic convergence. 
This is optimal since it resembles the optimal convergence rate $p_1 = 2$ 
of the interpolation error \cite{du12}. Namely, it indicates that in \eqref{errorSum} the mollifier and the integration errors are negligible with respect to the interpolation error.
Instead, in case of quadratic FE discretization, the observed rate is $p \approx 3$ (similar to the optimal one $p_1 = 3$) only for $ml \leq 4$, but it deteriorates for higher values of $ml$, i.e. as we refine the meshes (this behavior happens consistently on all the tested meshes, quadrilateral, triangular and mixed).
This is due to the combined effect of the mollifier and integration errors that
start dominating for $h \rightarrow 0$. 
Since $\varepsilon \propto (2/3)^{ml-2}$, the mollifying function defined in 
\eqref{eq:mollifier} exhibits a sharper gradient as we increase $ml$ inducing a less accurate numerical integration,  and, hence, higher values of $E_i$. Moreover, since the mesh refinement significantly reduces the interpolation error $C_1 h^{p_i}$, as we increase $ml$ the error contibution $C_2 \varepsilon^{p_2}$ becomes dominant affecting the overall convergence rate $p$.

We further test the convergence rate with respect to $h$ considering the fourth-degree polynomial
\begin{equation}
\label{eq:quartic}
u(\xb) = x^4 + y^4\,,
\end{equation}
for  which  the  corresponding  source  term is given by $f(\xb) = - 12 (x^2 + y^2) -  \delta^2$ 
for $\xb \in \Omega$. Similarly to the previous test, in Table \ref{tab_mesh_convergence2} we
report the numerical results for linear and quadratic FE discretizations, on quadrilateral, triangular and mixed meshes. Again, $p \approx 2$ 
for all the linear discretizations, while $p \approx 3$ for quadratic discretizations only for $ml \leq 3$. Same considerations as for the previous test can be inferred.

\begin{center}
\begin{table}[t]%
\centering
\caption{$h$-Convergence test: errors $\|u - u_h\|_{L^2}$ and computed order $p$  as the mesh level $ml$ increases for $u=x^3 + y^3$, $L_{min} = 1$, $L_{max} = 3$, $\delta=0.2$, $h_0 = 0.1$, $h = h_0 (1/2)^{ml-2}$, $\varepsilon_0=0.0125$, $\varepsilon = \varepsilon_0(2/3)^{ml-2}$.}
 \label{tab_mesh_convergence1}
 \begin{tabular*}{350pt}{@{\extracolsep\fill}c |c c |c c | c c@{\extracolsep\fill}}
        \toprule
     & \multicolumn{2}{c|}{\textbf{QUAD}} &  \multicolumn{2}{c|}{\textbf{TRI}} &  \multicolumn{2}{c}{\textbf{MIXED}}\\ 
        \midrule
       $ml$ &     
       bilinear & quadratic &  linear & quadratic &  linear & quadratic \\
       \midrule
        2 & 4.363E-03 & 6.077E-05 & 4.373E-03 & 6.389E-05 & 2.386E-03 & 3.352E-05 \\
        & \textbf{1.996} & \textbf{3.090} & \textbf{1.999} & \textbf{3.015}& \textbf{2.028} & \textbf{3.188} \\
        3 & 1.094E-03 & 7.135E-06 & 1.094E-03 & 7.906E-06 & 5.848E-04 & 3.677E-06 \\
        & \textbf{1.998} & \textbf{2.995} & \textbf{1.999} & \textbf{2.989} & \textbf{2.015} & \textbf{2.865}\\
        4 & 2.738E-04 & 8.950E-07 & 2.737E-04 & 9.956E-07 & 1.447E-04 & 5.046E-07 \\
        & \textbf{2.000} & \textbf{2.690} & \textbf{1.999} & \textbf{2.694} & \textbf{2.006} & \textbf{1.674}\\
        5 & 6.845E-05 & 1.387E-07 & 6.845E-05 & 1.538E-07 & 3.602E-05 & 1.581E-07\\
        & \textbf{2.000} & \textbf{1.012} & \textbf{2.000} & \textbf{1.315} & \textbf{2.003} & \textbf{0.880}\\
        6 & 1.711E-05 & 6.878E-08 & 1.711E-05 & 6.179E-08 & 8.983E-06 & 8.589E-08\\
\bottomrule
\end{tabular*}
\end{table}
\end{center}

\begin{center}
\begin{table}[t]%
\centering
\caption{$h$-Convergence test: errors $\|u - u_h\|_{L^2}$ and computed order $p$  as the mesh level $ml$ increases for $u=x^4 + y^4$, $L_{min} = 1$, $L_{max} = 3$, $\delta=0.2$, $h_0 = 0.1$, $h = h_0 (1/2)^{ml-2}$, $\varepsilon_0=0.0125$, $\varepsilon = \varepsilon_0 (2/3)^{ml-2}$.}
  \label{tab_mesh_convergence2}
 \begin{tabular*}{350pt}{@{\extracolsep\fill}c |c c |c c | c c @{\extracolsep\fill}}
        \toprule
     & \multicolumn{2}{c|}{\textbf{QUAD}} &  \multicolumn{2}{c|}{\textbf{TRI}} &  \multicolumn{2}{c}{\textbf{MIXED}} \\ 
        \midrule
       $ml$ &     
       bilinear & quadratic &  linear & quadratic &  linear & quadratic  \\
       \midrule
        2 & 6.383E-03 & 9.544E-05 & 6.398E-03 & 1.027E-04 & 3.267E-03 & 5.561E-05\\
        & \textbf{1.972}	& \textbf{2.892}	& \textbf{1.976}	& \textbf{2.914}	& \textbf{2.005}	& \textbf{2.769}  \\
        3 & 1.627E-03 & 1.285E-05 & 1.626E-03 & 1.362E-05 & 8.138E-04 & 8.155E-06 \\
        & \textbf{1.986}	& \textbf{2.161}	& \textbf{1.985}	& \textbf{2.240}	& \textbf{2.000}	& \textbf{1.677}\\
        4 & 4.106E-04 & 2.872E-06 & 4.105E-04 & 2.883E-06 & 2.034E-04 & 2.550E-06 \\
        & \textbf{1.990}	& \textbf{1.374}	& \textbf{1.990}	& \textbf{1.373}	& \textbf{1.994}	& \textbf{1.214}\\
        5 & 1.033E-04 & 1.108E-06 & 1.033E-04 & 1.113E-06 & 5.106E-05 & 1.099E-06 \\
        & \textbf{1.990}	& \textbf{1.178}	& \textbf{1.990}	& \textbf{1.176}	& \textbf{1.988}	& \textbf{1.159}\\
        6 & 2.599E-05 & 4.895E-07 & 2.599E-05 & 4.923E-07 & 1.287E-05 & 4.920E-07 \\
\bottomrule
\end{tabular*}
\end{table}
\end{center}

\paragraph{$\boldsymbol \varepsilon$-Convergence}
We analyze the contribution of the mollifier to the discretization error, i.e. $C_2 \varepsilon^{p_2}$ in \eqref{errorSum}.
We consider the fourth-order polynomial in
\eqref{eq:quartic} on a triangular mesh with a quadratic FE discretization. In Table \ref{tab_eps_conv}, for fixed $\delta$ and $L_{min}$, we report on the error $\|u - u_h\|_{L^2}$ and the computed convergence rate $p_2$, starting from $\varepsilon = 0.1$ and progressively halving it.
To minimize the interpolation error $C_i h^{p_1}$ and the integration error $E_i$, so that the overall error is dominated by the $\varepsilon$-contribution, for each $\varepsilon$, during the tests the number of adaptive refinements $L_{max}$ and of the mesh level $ml$ have been increased with a brute force procedure until the values of the overall error did not change significantly anymore. The error values reported in the table are those obtained only after this steady state was reached. The interpolation error (IE) in the table is reported as a reference and is obtained by numerically solving the local counterpart of the nonlocal Poisson problem at the finer mesh level, i.e. the one to which is associated the steady state.  Also as a reference, we report the relative error (RE) between the interpolation error and the overall error
$$
\hbox{RE} =\frac{ \mbox{IE} } {\|u-u_h\|_{L^2(\Omega\cup\Gamma)}}\,.
$$
The smaller the value of RE the more accurate the data, since the impact of the interpolation error on the overall error vanishes. 
The $\varepsilon$-convergence order is deliberately computed as
\begin{equation}
\label{eq:convergence_p2}
p_2 \approxeq \ln\big({E(\varepsilon)}/{E\left({\varepsilon}/{2}\right)} \big)/\ln(2) \,,
\end{equation}
and is approximately $p_2 \approx 2$ for all the tested mollifier thicknesses.
\begin{center}
\begin{table}[t]%
\centering
\caption{
$ \varepsilon$-Convergence test: $\|u - u_h\|_{L^2}$ and computed order $p_2$ as $\varepsilon$ decreases for $u=x^4+y^4$, $\delta=0.2$, $L_{min} = 1$, and quadratic FE.
\label{tab_eps_conv}}
 \begin{tabular*}{200pt}{@{\extracolsep\fill}c|c|c|c@{\extracolsep\fill}}
\toprule
$\mathbf{\varepsilon}$ & \textbf{IE} & \textbf{TRI}    & \textbf{RE}   \\
\midrule
0.1      &   1.602E-06  & 7.107E-04 & 2.25E-03  \\
         &     & \textbf{2.09} &   \\
0.05     &   1.602E-06  & 1.671E-04 & 9.58E-03 \\
         &     &  \textbf{1.90} &   \\
0.025    &   1.602E-06  & 4.4716E-05 & 3.58E-02 \\
         &     &  \textbf{1.91} &       \\
0.0125   &   2.002E-07  & 1.189E-05 & 1.68E-02 \\
         &     &  \textbf{1.95} &    \\
0.00625  &   2.002E-07  & 3.073E-06 & 6.51E-02 \\
        &     &  \textbf{1.92} &    \\
0.00313  &   2.002E-07  & 8.117E-07 & 2.46E-01 \\
\bottomrule
\end{tabular*}
\end{table}
\end{center}
\paragraph{Comparison with other algorithms}
In Section \ref{AppDistSec} we introduced the maximum and minimum distance between
elements in order to determine all elements intersecting the ball that identifies the support of the kernel function, for which integration is performed. However, in the literature, there are other techniques to determine whether an element should be considered during the assembly of the FE matrix. As an example, the paper by D'Elia et al.\cite{DEliaFEM2020} proposes a technique for which 
only the elements whose barycenter lies inside the ball are considered for integration. This approach results in an approximation of the ball by a union of whole finite elements, and shows second order $h$-convergence for all FE spaces (linear, quadratic, etc.), as the rate is determined by the ball approximation. 
With the purpose of testing the performance of our method against current approaches, we conduct a comparison study, where we consider two different values of $\varepsilon$ and compare our results with those obtained with a barycenter approach. Specifically, we use
$\varepsilon_0 = 0.0125$, with $\varepsilon = \varepsilon_0 (2/3)^{ml-2}$ 
and $\varepsilon_0 = 0.0250$, with $\varepsilon = \varepsilon_0 (1/2)^{ml-2}$ (note that the $\varepsilon$ decrease differently). 
For the the adaptive quadrature rule presented in this paper we use a Legendre quadrature 
rule both for the internal and external integral, whereas for the barycenter method we use 
a hybrid Lobatto $\times$ Legendre quadrature rule,  
Lobatto in the outer integral and Legendre in the inner one.
In Table \ref{tab_mesh_comparison} we report the discretization errors for linear FE on a quadrilateral
mesh as the mesh level $ml$ increases.
The $h$-convergence rate $p$ is computed as in \eqref{eq:convergence_h}.
We see that this rate is always optimal ($p \approx 2$) regardless of the value of $\varepsilon$; in other words, the mollifier and numerical quadrature contributions are negligible and the error is dominated by the FE interpolation contribution. 
On the other hand, the barycenter method shows bigger errors and less regular convergence order, due to geometric error introduced by the ball approximation. The computational times are all comparable since $L_{max}$ has been taken equal to 1, and all  quadrature rules have the same number of points.
The interpolation error IE reported in the table has the same meaning as already discussed.

\begin{center}
\begin{table}[t]%
\centering
\caption{Comparison test: errors $\|u - u_h\|_{L^2}$ and $h$-convergence rate $p$ with a quadrilateral mesh and linear FE as the mesh refinement level $ml$ increases for $u=x^4 + y^4$, $\delta=0.2$, $h_0 = 0.1$, $h = h_0 (1/2)^{ml-2}$. For the adaptive algorithm: (Legendre $3\times3$) $\times$ (Legendre $3\times3$), $L_{min} = 1$, $L_{max} = 3$. For the barycenter algorithm: (Lobatto $3\times3$) $\times$ (Legendre $3\times3$)}
\label{tab_mesh_comparison}
 \begin{tabular*}{350pt}{@{\extracolsep\fill}c | c | c c | c @{\extracolsep\fill}}
 \toprule
& \textbf{IE} & \multicolumn{2}{c|}{\textbf{Adaptive}}
& \textbf{Barycenter} \\
\midrule
$ml$ & - & $\varepsilon = 0.0125\,(2/3)^{ml-2}$ &
$\varepsilon = 0.025\,(1/2)^{ml-2}$ &  \\
\midrule
2 & 3.280E-03 &  3.281E-03 &  3.290E-03 &  5.428E-03 \\
& \textbf{1.99} & \textbf{2.02} & \textbf{2.01} & \textbf{1.95} \\
3 & 8.250E-04 & 8.092E-04 & 8.147E-04 & 1.401E-03 \\
& \textbf{2.00} & \textbf{1.99} & \textbf{2.00} & \textbf{1.82} \\
4 & 2.066E-04 & 2.034E-04 & 2.038E-04 & 3.974E-04 \\
& \textbf{2.00} & \textbf{1.99} & \textbf{2.00} & \textbf{2.00} \\
5 & 5.168E-05 & 5.118E-05 & 5.104E-05 & 9.909E-05 \\
& \textbf{2.01} & \textbf{1.99} & \textbf{2.00} & \textbf{1.69} \\
6 & 1.287E-05 & 1.287E-05 & 1.273E-05 & 3.073E-05 \\
& \textbf{1.99} & \textbf{1.98} & \textbf{2.00} & \textbf{1.67} \\
7 & 3.231E-06 & 3.270E-06 & 3.195E-06 & 9.645E-06 \\
\bottomrule
\end{tabular*}
\end{table}
\end{center}

\subsection{Three-dimensional tests}
\begin{figure}[t!]
   \centering
   \includegraphics[height=1.6in]{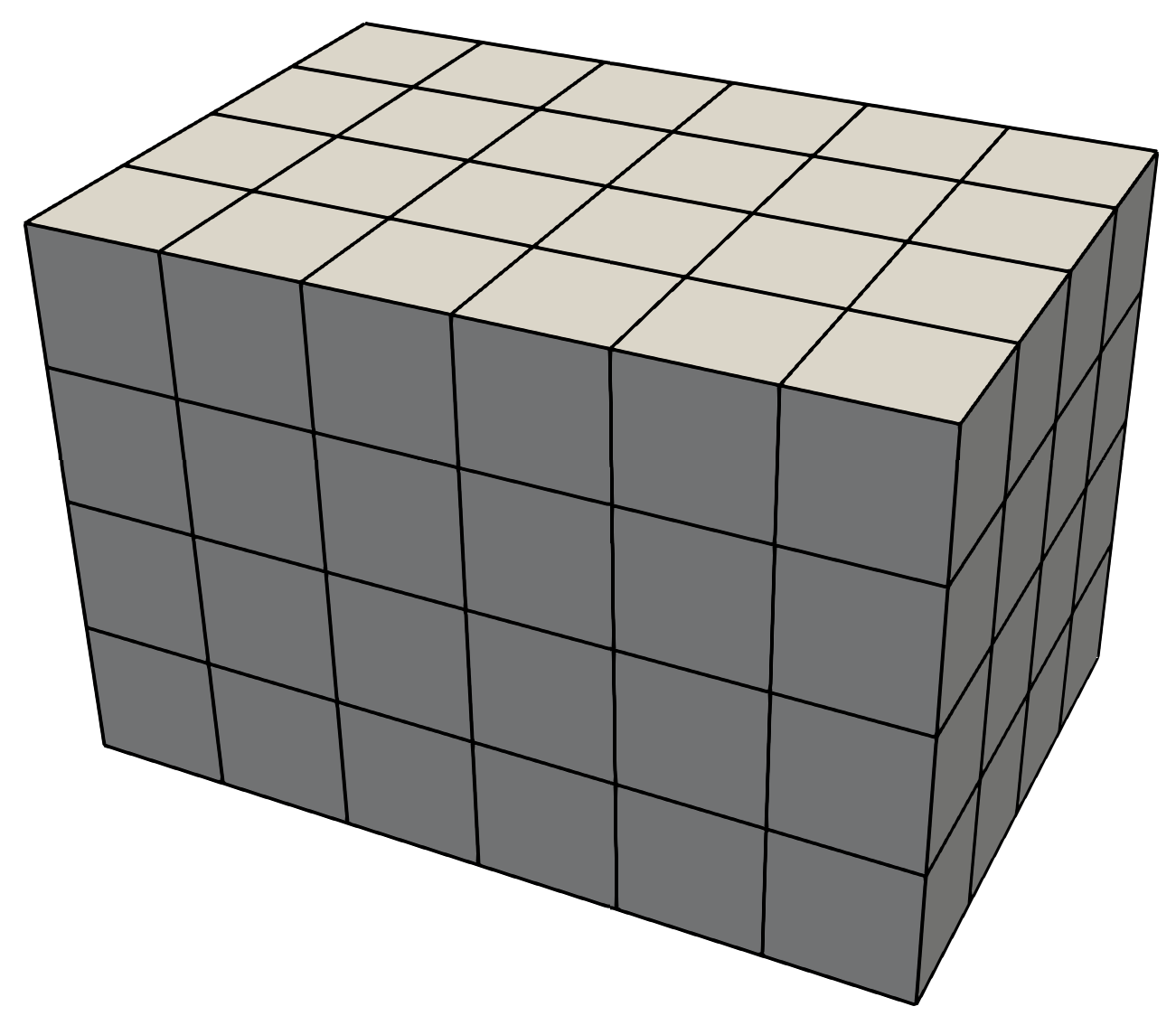}
\caption{Hexahedron mesh used for the three-dimensional numerical simulations, with 
$N_h = 175$ (linear FE discretization) and $N_h = 605$ (quadratic FE discretization). }
   \label{fig_domain_3d}
\end{figure}
\begin{center}
\begin{table}[!ht]%
\centering
 \caption{Consistency in 3D: $h=0.1$, $\delta=0.2$, $\varepsilon_0=0.0125$, $u=x^2+y^2+z^2$,
 $L_{min} = 1$, $\varepsilon = \varepsilon_0 \left(\frac{3}{4}\right)^{L_{max}-2}$,
 quadratic FE.}
 \label{tab_consistency_3D}
 \begin{tabular*}{75pt}{@{\extracolsep\fill}c|c@{\extracolsep\fill}}
 \multicolumn{2}{c}{   $\|u - u_h\|_{L^2}$}  \vspace{+0.3cm}\\ 
\toprule
     $\mathbf{L_{max}}$ & \textbf{HEX}\\
        \midrule
        2   &   6.863E-04 \\
        3   &   2.499E-05 \\
        4   &   3.241E-06  \\
        5   &   7.035E-08   \\
\bottomrule
\end{tabular*}
\end{table}
\end{center}
Three-dimensional simulations of nonlocal problems are incredibly challenging, especially in a variational setting, due to the prohibitively high computational 
effort. At the time of this study, the nonlocal literature still does not offer efficient, scalable algorithms for three-dimensional FE implementations of nonlocal solvers for kernels with bounded support. Efficient algorithms for fractional operators are proposed in \cite{AinsworthGlusa2017}, whereas implementations for compactly-supported, integrable kernels on structured grids can be found in \cite{Wang2014} and \cite{Vollman2019}. Here, not only do we introduce an efficient three-dimensional implementation, but we also propose a scalable, parallel implementation.  

We proceed as in the two-dimensional case. In all our tests we consider the numerical domain 
$\Omega = [-0.6, 0.6] \times [-0.4, 0.4] \times [-0.4, 0.4]$, discretized with the regular hexahedron mesh reported in Figure \ref{fig_domain_3d}.

We first consider a consistency test; for fixed values of $\delta$,
$L_{min}$ and $h$ we increase the refinement level $L_{max}$. For the same reasons explained in the previous section, we consider the analytic solution $u = x^2 + y^2 + z^2$ for which the corresponding forcing term is given by $f(\xb)=-6$ for $\xb\in\Omega$.
Results for quadratic FE on the structured hexahedral mesh are reported in Table \ref{tab_consistency_3D}.
As expected, the error decreases when the adaptive refinement level $L_{max}$ is increased, illustrating the consistency of the adaptive algorithm in three dimensions.

Next, we test the $h$-convergence on the structured hexahedral mesh for both linear and quadratic FE. For the analytic solution $u(\xb) = x^3 + y^3 + z^3$ and fixed $L_{min}$, $L_{max}$, and $\delta$ we study the 
behavior of the error $\|u - u_h\|_{L^2}$ as we halve $h$. Here, the  corresponding  source  term is $f(\xb) = - \Delta u(\xb) =
- \mcL u = - 6 (x + y + z)$ for $\xb \in \Omega$.
In Table \ref{tab_convergence3D} (left), we report on the errors and the computed convergence rate $p$. 
We also consider the analytic solution
$u(\xb) = x^4 + y^4 + z^4$
with  the  corresponding  source  term 
$f(\xb) = - 12 (x^2 + y^2 + z^2) - 6/7 \delta^2$ for $\xb \in \Omega$. 
For the same parameters as for the cubic polynomial, results of numerical experiments are reported in Table \ref{tab_convergence3D} (right).
We observe an optimal convergence rate $p \approx 2$ for linear FE discretizations for both the cubic and the quartic cases. 
Similarly to the two dimensional case, the convergence rate for the quadratic FE discretizations starts from $p \approx 3$ and it deteriorates when $ml$ grows. In general, the three-dimensional numerical results are consistent with the two-dimensional ones and the same considerations can be inferred. 

\begin{center}
\begin{table}[t]%
\centering
\caption{$h$-convergence: errors $\|u - u_h\|_{L^2}$ and convergence rate $p$ as the mesh level $ml$ increases for $u=x^3 + y^3 + z^3$ (left) and $u=x^4 + y^4 + z^4$ (right), $L_{min} = 1$, $L_{max} = 2$, $\delta=0.2$, $h_0 = 0.2$,
 $h = h_0 (1/2)^{ml-1}$, $\varepsilon_0=0.01875$, 
 $\varepsilon = \varepsilon_0 (2/3)^{ml-1}$.}
 \label{tab_convergence3D}
 \begin{tabular*}{130pt}{@{\extracolsep\fill}c|cc@{\extracolsep\fill}}
\toprule
                & \multicolumn{2}{c}{   \textbf{HEX}} \\
\midrule
     $ml$ & linear & quadratic    \\
\midrule
        1   &   2.220E-02   &   6.739E-04    \\
        & \textbf{1.988}	& \textbf{2.972} \\
        2   &   5.593E-03   &   8.587E-05    \\
        & \textbf{1.998}	& \textbf{2.797} \\
        3   &   1.400E-03   &   1.235E-05    \\
        & \textbf{1.999}	& \textbf{2.816} \\
        4   &   3.501E-04   &   1.753E-06    \\
\bottomrule
\end{tabular*}
\hspace{2cm}
\begin{tabular*}{130pt}{@{\extracolsep\fill}c|cc@{\extracolsep\fill}}
\toprule
                & \multicolumn{2}{c}{   \textbf{HEX}} \\
\midrule
     $ml$ & linear & quadratic    \\
\midrule
        1   &   3.299E-02   &   9.328E-04     \\
        & \textbf{1.964}	& \textbf{2.864} \\
        2   &   8.452E-03   &   1.281E-04     \\
        & \textbf{1.985}	& \textbf{2.674} \\
        3   &   2.134E-03   &   2.006E-05     \\
        & \textbf{1.992}	& \textbf{2.303} \\
        4   &   5.361E-04   &   4.064E-06   \\ 
\bottomrule
\end{tabular*}
\end{table}
\end{center}

\subsection{Parallel implementation}
The high computational effort necessary to perform three-dimensional nonlocal simulations requires a parallel implementation, even for relative small-sized problems. This is mostly due to the fact that, unlike what happens in the local settings, the bandwidth of a nonlocal matrix increases with mesh refinement, since the radius, $\delta$, of the nonlocal neighborhood remains fixed. In this section, we first highlight the need of a parallel implementation, we then propose a parallel algorithm, and finally illustrate its efficiency on two- and three-dimensional problems.

\paragraph{Complexity of nonlocal simulations}
For each refinement, i.e. every time the mesh size is halved, the average number of elements contained in the kernel's support increases of a factor $2^N$. Consequently, the number of unknowns increases of the same factor and the number of nonzero entries in the matrix increases of $2^{2N}$, as opposed to local settings where the average increase at every mesh refinement is $2^N$. 
To better understand the impact of such increase let us consider the quadrilateral mesh used in the previous simulations for the quadratic solution, with a sparse matrix where the allocation for a non-zero entry is 12-bytes: 4 to specify the column location in the row (int) and 8 to store the value (double).  
At mesh level $ml=6$, the number of unknowns is $N_h=148353$ and, for $\delta  = 0.2$, the maximum number of entries in one row is 34749. The memory allocation for this mesh level requires approximately 80 GB of memory, while a corresponding local problem would require no more than 50 MB of memory.

Similarly, for each mesh refinement, the computational time to assemble the nonlocal matrix increases in average by the same factor $2^{2N}$, since the number of elements increases of $2^N$ and the average 
number of elements contained inside the kernel's support increases by $2^N$. For a local problem the assembly time increases only of a factor $2^N$.
These differences in memory allocation and CPU time indicate that a parallel implementation of the nonlocal assembly is vital to make nonlocal models a preferred and viable modeling option.

\paragraph{Details on the implementation}
We begin our description by stressing the fact that, even in a local context, a parallel implementation of a FE algorithm is nontrivial. In what follows, we assume that the reader is familiar with the FE method and relatively accustomed to its parallel implementation \cite{babuvska1990some}; thus, we proceed by highlighting the main challenges that arise in nonlocal implementations.

Our parallel algorithm has been implemented in FEMuS \cite{FEMuS} and is publicly available on GitHub. FEMuS is a in-house FE C++ library that interfaces with PETSc \cite{balay2019petsc}, which provides the linear algebra library for the parallel solver. The parallalization of the nonlocal assembly has been entirely developed within FEMuS.

As it is common in parallel FE settings, the mesh elements are partitioned among the processes $N_p$;  FEMuS uses the METIS/PARAMETIS \cite{karypis1997parmetis}
library for partitioning unstructured meshes. For each process $I$, we let $\Omega_I$ be the domain composed of the elements owned by $I$ that overlap with $\Omega$ and $\Pi_I$ be the domain composed of the elements owned by $I$ that overlap with $\Gamma$. Although highly desirable, each sub-domain $\Omega_I \cup \Pi_i$ does not need to be simply connected. Similar to the definition of $\Gamma$, we define the interaction domain $\Gamma_I$ of $\Omega_I$ as
\begin{equation}\label{eq:interaction-domain_I}
\Gamma_I = \{\yb\in (\Omega \cup \Gamma) \setminus \Omega_I:  |\xb-\yb|\leq\delta + \varepsilon\; \text{for some} \; \xb\in\Omega_I\cup\Pi_I\}.
\end{equation}

While in the local case two processes $I$ and $J$ have to exchange information only between elements on a shared boundary, i.e. on $\partial\Omega_I \cap \partial\Omega_J$, in the nonlocal case the two processes need to exchange information whenever an element of $I$ intersects the interaction domain $\Gamma_J$ or vice-versa. We denote by $\Gamma_{JI}$ the region made up by all the elements owned by process $J$ that intersect with the interaction domain $\Gamma_I$, for $I,J = 1, \cdots, N_p$.
Set $\widetilde{\Gamma}_I = \cup_{J\in Np} \Gamma_{JI}$. The following relations hold
\begin{align*}
& \Gamma_{JI} \cap \Gamma_{KI} = \varnothing \; \mbox{ for } J \ne K, \\
& {\Gamma}_I \subseteq \widetilde{\Gamma}_I, \\
&\Gamma_{II} = \Pi_I, \\
&\Gamma = \cup_{I\in Np} \Gamma_{II}, \\
&\Gamma_{II} \cap \Gamma_{JJ} = \varnothing \; \mbox{ for } I \ne J.
\end{align*}

The following expression clarifies what operations can be performed within one process and which ones require exchange of information among processes. We consider a general integral, whose numerical computation is ubiquitous in Algorithm \ref{pseudo}. For any function $w(\xb,\yb)$, any double integral such as those defined in the first part of the paper can be rewritten as
\begin{align}
&\iint\limits_{(\Omega\cup\Gamma)^2}  \gamma_{\epsilon}(\xb,\yb)\, w(\xb,\yb) d\yb\,d\xb\nonumber\\
&=\sum_{I=1}^{N_p}\;\int\limits_{\Omega\cup\Gamma}\; \int\limits_{\Omega_I\cup\Gamma_{II}} \gamma_{\epsilon}(\xb,\yb)\,w(\xb,\yb) d\yb\,d\xb \nonumber\\
&=\sum_{I=1}^{N_p}\;\int\limits_{\Omega_I\cup\Gamma_I}\; \int\limits_{\Omega_I\cup\Gamma_{II}} \gamma_{\epsilon}(\xb,\yb)\,w(\xb,\yb) d\yb\,d\xb\nonumber\\
&=\sum_{I=1}^{N_p}\;\int\limits_{\Omega_I\cup \widetilde{\Gamma}_I} \int\limits_{\Omega_I\cup\Gamma_{II}} \gamma_{\epsilon}(\xb,\yb)\,w(\xb,\yb) d\yb\,d\xb\nonumber\\
&=\sum_{I=1}^{N_p}\;\int\limits_{\Omega_I\cup \left( \cup_{J\in Np} \Gamma_{JI} \right)  }\; \int\limits_{\;\,\Omega_I\cup\Gamma_{II}} \gamma_{\epsilon}(\xb,\yb)\,w(\xb,\yb) d\yb\,d\xb\nonumber\\
& = 
\sum_{I=1}^{N_p}\left(\quad\iint\limits_{(\Omega_{I} \cup \Gamma_{II})^2} \gamma_{\epsilon}(\xb,\yb)\,w(\xb,\yb) d\yb\,d\xb +
\sum_{\underset{J \ne I}{J=1}}^{N_p} \;\int\limits_{\Gamma_{JI}} \int\limits_{\;\,\Omega_{I} \cup \Gamma_{II}}  \gamma_{\epsilon}(\xb,\yb)\,w(\xb,\yb) d\yb\,d\xb\right) \nonumber\\
& = 
\sum_{I=1}^{N_p}\left(\quad\iint\limits_{(\Omega_{I} \cup \Gamma_{II})^2} \gamma_{\epsilon}(\xb,\yb)\,w(\xb,\yb) d\yb\,d\xb +
\sum_{\underset{J \ne I}{J=1}}^{N_p} \;\int\limits_{\Gamma_{JI}} \int\limits_{\;\,\left(\Omega_{I} \cup \Gamma_{II}\right) \cap \widetilde{\Gamma}_J}  \gamma_{\epsilon}(\xb,\yb)\,w(\xb,\yb) d\yb\,d\xb\right) \nonumber\\
& = 
\sum_{I=1}^{N_p}\left(\quad\iint\limits_{(\Omega_{I} \cup \Gamma_{II})^2} \gamma_{\epsilon}(\xb,\yb)\,w(\xb,\yb) d\yb\,d\xb +
\sum_{\underset{J \ne I}{J=1}}^{N_p} \;\int\limits_{\Gamma_{JI}} \int\limits_{\Gamma_{IJ}}  \gamma_{\epsilon}(\xb,\yb)\,w(\xb,\yb) d\yb\,d\xb \right). \label{parallel_integral}
\end{align}
A few remarks are in order. In the last equality the first double integral relies on information fully available on process $I$. In contrast, the outer integrals of the second term are defined on sub-domains $\Gamma_{JI}$, which belong to processes different from $I$. Thus, for process $I$, this information has to be made available through parallel implementation. Namely, before integration, each process $J$ sends to $I$ the needed information stored in the elements that make up $\Gamma_{JI}$ through the MPI send/receive protocol \cite{gropp1999using}. The inner integral is on the sub-region $\Gamma_{IJ}$ owned by process $I$. In the definition of $A_{ij}^{21}$ and $A_{ij}^{22}$, the inner integral identifies the $i$-th row of the matrix through the test function $\varphi_i(\yb)$. Hence, the degree of freedom of that row is owned by process $I$. We allocate the memory to store the entries of each row of the sparse parallel matrix on the process that owns the row itself; this results in an optimal parallel assembly since it minimizes (almost removes) the communication time. 

\begin{remark}
As already pointed out in Section \ref{AppDistSec}, identifying if a given element intersect a given region can be challenging and computationally expensive. As described before, we rely on a simple and fast conservative test to identify if an element $\mathcal{E}_j$, owned by process $J$ belongs to $\Gamma_{JI}$. 
Namely, we use formula \eqref{mindist} to approximate the minimum distance between the bounding box enveloping $\mathcal{E}_j$ and the bounding box enveloping all the elements owned by processor $I$. If the approximate distance is less than $\delta + \varepsilon$, then $\mathcal{E}_j$ is included into $\Gamma_{JI}$.
\end{remark}

\paragraph{Performance tests}
We first consider two-dimensional problems with both linear and quadratic FE
discretizations with $480641$ and $1440513$ degrees of freedom, respectively. We consider a quadrilateral mesh with $ml = 7$ (for which $ h = $1.5625E-03), $\varepsilon =h/2$, $\delta = 0.05$, $L_{max} = 3$, $L_{min} = 1$. Since the high computational costs are mainly due to the assembly procedure, we report separately the CPU times required by the assembly routine ($t_a$) and the total CPU time ($t_t$), which also includes the solver time. Computational times, as the number of processors $N_p$ grows, are reported in Table \ref{tab_scalability_linear}. 
We also report the time ratios $\hbox{TR}_a (N_p) = t_a(N_p / 2 ) / t_a(N_p ) $
and $\hbox{TR}_t (N_p) = t_r(N_p/ 2 ) / t_r(N_p )$. All cases feature large time rations, illustrating the good scaling properties of the parallel implementation for both linear and quadratic FE discretizations.

We also test the scaling properties of the parallel algorithm with three-dimensional numerical simulations. Similarly to the two-dimensional case, we consider a hexahedral mesh with $615489$ degrees of freedom and a quadratic FE discretization. We also set $ h = 0.025$, $\varepsilon = h/2$, 
$\delta = 0.1$, $L_{max} = 2$, $L_{min} = 1$. In Table \ref{tab_scalability_quadratic3D}
we report the assembly time $t_a$, the total time $t_t$ and the time ratios of the 
numerical simulation for increasing number of processes. As in the two-dimensional case, the parallel simulations show high time ratios,
indicating good scaling properties of the algorithm.

In Figure \ref{fig_parallel} we show the assembly speedup $S$ values referred to $N_p = 36$,
defined as $S = t_{a}(36) / t_a (N_p)$ for two-dimensional linear (left), quadratic (center)
and three-dimensional quadratic (right) simulations. We also report the linear speedup (dashed line).
On the basis of the presented results, the code shows excellent scalability properties especially in 3D. Thus, it proves to be suitable for large scale nonlocal simulations.

\begin{center}
\begin{table}[t]%
\centering
\caption{Assembly time $t_{a}$ and total time $t_t$ of simulations with $480641$ dofs, linear FEM,
and $1440513$ dofs, quadratic FEM,
 $ h = $1.5625E-03,
 $\varepsilon = h/2$, 
 $\delta = 0.05$,
 $L_{max} = 3$,
 $L_{min} = 1$.}
 \label{tab_scalability_linear}
 \begin{tabular*}{220pt}{@{\extracolsep\fill}c | c c | c c@{\extracolsep\fill}}
 \toprule
  & \multicolumn{2}{c|}{ Linear} &  \multicolumn{2}{c}{ Quadratic} \\ 
\midrule
$\mathbf{N_p}$    &    $\mathbf{t_a \, [s]}$     & $\mathbf{t_t \, [s]}$  &    $\mathbf{t_a \, [s]}$     & $\mathbf{t_t \, [s]}$ \\
\midrule
        36   &   4268.57 &   4470.87     &   1795.86 &   2168.10     \\
         &   \textbf{1.89} & \textbf{1.90} & \textbf{1.77} & \textbf{1.68} \\
        72   &   2251.59 &   2346.46     &   1013.76 &   1290.14     \\
         &   \textbf{1.92} & \textbf{1.94} & \textbf{1.88} & \textbf{1.94} \\
        144  &   1166.77 &   1207.84     &   538.12 &  663.53    \\
         &   \textbf{1.72} & \textbf{1.73} &   \textbf{1.64} & \textbf{1.70} \\
        288  &   675.67  &   694.44      &   327.75  &   388.88     \\
         &   \textbf{1.85} & \textbf{1.85} &   \textbf{1.65} & \textbf{1.72} \\
        576  &   364.65  &   373.77      &   197.83  &   225.32      \\
\bottomrule
\end{tabular*}
\end{table}
\end{center}

\begin{center}
\begin{table}[t]%
\centering
 \caption{Assembly time $t_{a}$, total time $t_t$ and time ratios of simulations with $615489$ dofs, quadratic FEM,
 $ h = 0.025$,
 $\varepsilon = h/2$, 
 $\delta = 0.1$,
 $L_{max} = 2$,
 $L_{min} = 1$.}
 \label{tab_scalability_quadratic3D}
 \begin{tabular*}{130pt}{@{\extracolsep\fill}c c c@{\extracolsep\fill}}
\toprule
$\mathbf{N_p}$    &    $\mathbf{t_a \, [s]}$     & $\mathbf{t_t \, [s]}$ \\
       \hline \\[-10pt]
        36   &   17598.5 &    17727.83     \\
         &   \textbf{1.92} & \textbf{1.92} \\
        72   &   9155.14 &   9223.08 \\
         &   \textbf{2.00} & \textbf{2.00} \\
        144  & 4556.1   &     4591.51 \\
         &   \textbf{1.99} & \textbf{1.97} \\
        288  &   2286.17  &   2314.36    \\
\bottomrule
\end{tabular*}
\end{table}
\end{center}

\begin{figure}[t!]
   \centering
   \includegraphics[width=1.9in]{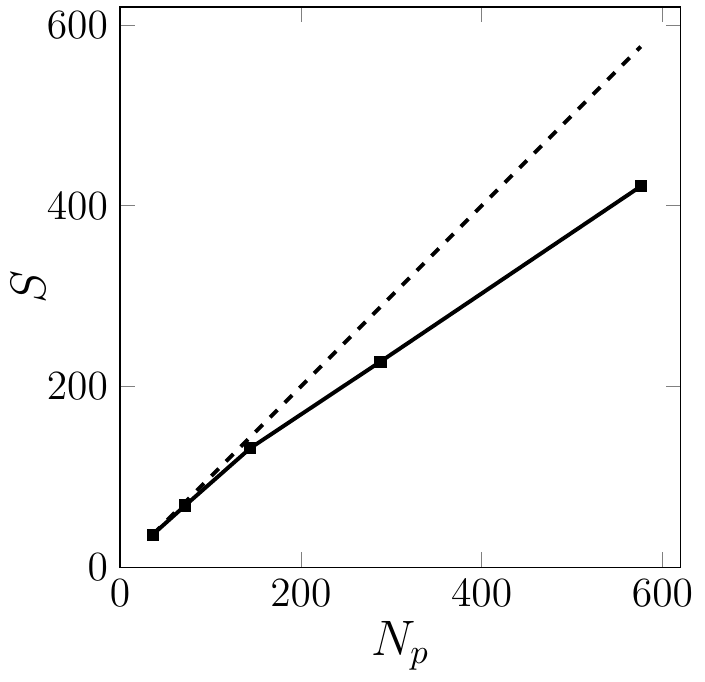}
   \quad
   \includegraphics[width=1.9in]{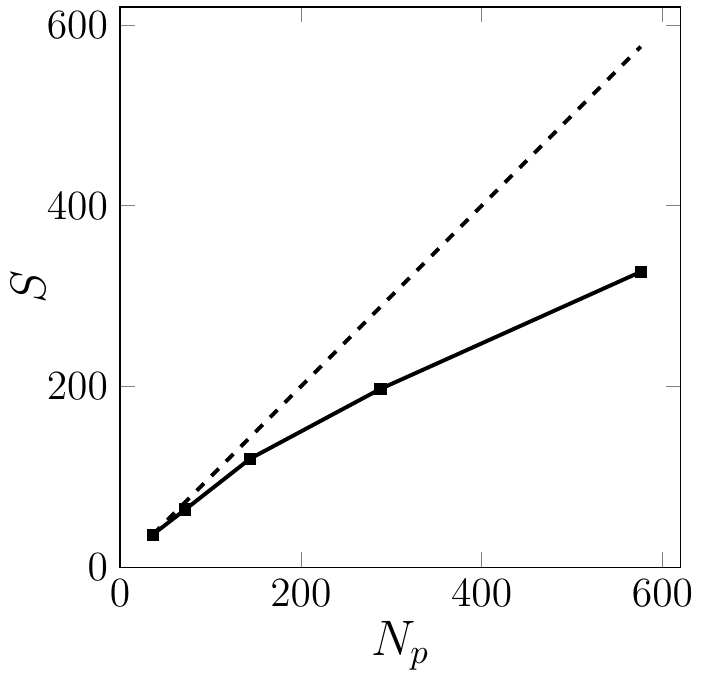}
   \quad
   \includegraphics[width=1.9in]{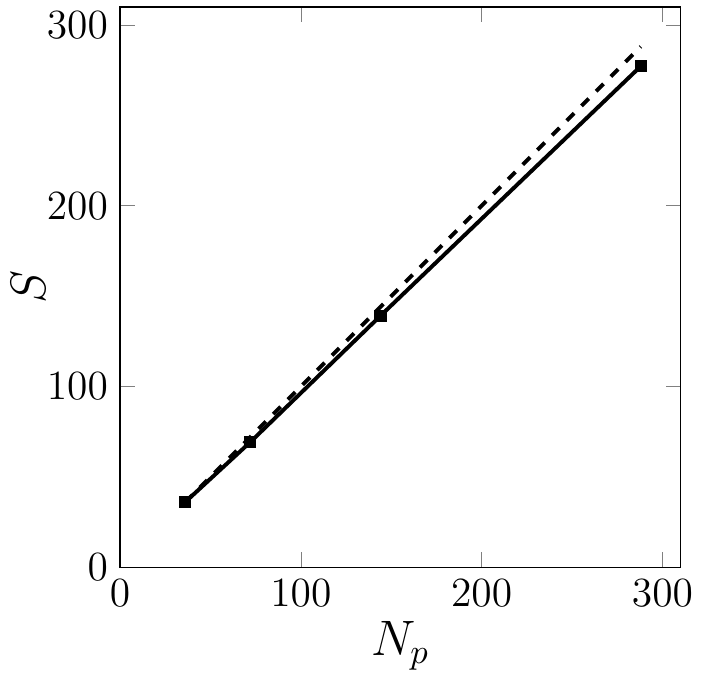}
\caption{Speedup as a function of the number of processors for two-dimensional
linear (left) and quadratic FE (center), and three-dimensional quadratic FE (right). 
}
   \label{fig_parallel}
\end{figure}


\section{Conclusion}\label{sec:conclusion}
We introduced an efficient, flexible, and scalable algorithm for FE discretizations of nonlocal diffusion problems characterized by compactly supported kernels. The novelty of our work is three fold. First, we circumvent the numerical difficulties arising from the integration of discontinuous kernels by multiplying the integrand function by a mollifier. This allows us to avoid the tedious and costly process of identifying ball-element intersections, hence avoiding integration over partial elements and/or curved elements. These tasks, even though manageable in two-dimensional settings, become nontrivial in three dimensions. Second, we introduce an adaptive quadrature rule that allows us accurately resolve the high gradients featured by the integrand function without compromising the computational efficiency. Third, we propose a parallel implementation of the mollified, adaptive algorithm that shows excellent scalability properties especially in three dimensions, where the cost of numerical integration dominates.
Our numerical results illustrate the theoretical findings and show that, in two dimensions, our algorithm is competitive with other efficient approximations of FE implementations. In three dimensions, this is the first efficient and scalable parallel implementation that has no constraints on the type of mesh or FE spaces. 

As such, this work contributes to making variational discretizations of nonlocal models a viable option, even for large scale problems. It also represents an effort towards increasing the usability of nonlocal equations, for which the high computational costs often hinders their popularity in engineering contexts, despite their undeniable improved accuracy.

\section*{Supporting information}
This work was supported by the National Science Foundation (NSF) Division of Mathematical Sciences (DMS) program, project 1912902, by Sandia National Laboratories (SNL) Laboratory-directed Research and Development (LDRD) program, project 218318 and by the U.S. Department of Energy, Office of Advanced Scientific Computing Research under the Collaboratory on Mathematics and Physics-Informed Learning Machines for Multiscale and Multiphysics Problems (PhILMs) project. Sandia National Laboratories is a multimission laboratory managed and operated by National Technology and Engineering Solutions of Sandia, LLC., a wholly owned subsidiary of Honeywell International, Inc., for the U.S. Department of Energy's National Nuclear Security Administration contract number DE-NA0003525. This paper, SAND2021-0672, describes objective technical results and analysis. Any subjective views or opinions that might be expressed in the paper do not necessarily represent the views of the U.S. Department of Energy or the United States Government.

\bibliographystyle{plain}
\bibliography{bib.bib}%

\end{document}